\newcommand{\mynote}[3]{
  \fbox{\bfseries\sffamily\scriptsize#1}
  {\small$\blacktriangleright$\textsf{\emph{\color{#3}{#2}}}$\blacktriangleleft$}}}
\newcommand{\mynote}[3]{}}
\definecolor{asparagus}{rgb}{0.53, 0.66, 0.42}
\definecolor{phase1}{HTML}{377EB8}
\definecolor{phase2}{HTML}{FF7F00}
\definecolor{phase3}{HTML}{4DAF4A}
\newcommand{\ubar}[1]{\underaccent{\bar}{#1}}
  \theoremstyle{plain}
  \newtheorem{theorem}{Theorem}
  \newtheorem*{theorem*}{Theorem}
  \newtheorem{proposition}{Proposition}
  \newtheorem{corollary}{Corollary}
  \newtheorem{lemma}{Lemma}
  \theoremstyle{definition}
  \newtheorem{definition}{Definition}
  \newtheorem{rem}{Remark}
  \def \N{\mathbb{N}}
  \def \R{\mathbb{R}}
  \def \T{\mathbb{T}}
  \def \Z{\mathbb{Z}}
	\def \S{\mathbb{S}}
  \newcommand {\sign} {\mathrm{sign}}
 \newcommand {\Int} {\mathop \textup{Int}}
 \newcommand {\dist} {\textup{dist}}
 \newcommand {\tr} {\textup{tr}}
 \newcommand {\id} {\textup{id}}
 \newcommand {\sdist} {\textup{sdist}}
  \newcommand {\eps} {\varepsilon}
 \newcommand{\dom}{\Omega}
 \newcommand{\schvar}{u}
\DeclareMathOperator*{\argmin}{\mathop \textup{arg\,min}}
\DeclareMathAlphabet{\mathbbold}{U}{bbold}{m}{n}
 \def\namedlabel#1#2{\begingroup
    #2%
    \def\@currentlabel{#2}%
    \phantomsection\label{#1}\endgroup
}
\title[Obstacle MBO]{Obstacle Mean Curvature Flow: Efficient Approximation and Convergence Analysis}
\author{Fabius Kr{\"a}mer, Tim Laux}
\address{Institut f{\"ur} Mathematik, Universit{\"a}t Heidelberg, Im Neuenheimer Feld 205, 69120 Heidelberg, Germany (\texttt{\{f.kraemer,tim.laux\}@math.uni-heidelberg.de})}
\date{\today}
\begin{document}
    
  \begin{abstract}
    
We introduce a simple and efficient numerical method to compute mean curvature flow with obstacles.
The method augments the Merrimam-Bence-Osher scheme with a pointwise update that enforces the constraint and therefore retains the computational complexity of the original scheme. 
Remarkably, this naive scheme inherits both crucial structural properties of obstacle mean curvature flow: a geometric comparison principle and a minimizing movements interpretation. 
The latter immediately implies the unconditional stability of the scheme.
Based on the comparison principle we prove the convergence of the scheme to the viscosity solution of obstacle mean curvature flow.
Moreover, using the minimizing movements interpretation, we show convergence of a spatially discrete model.
Finally, we present numerical experiments for a physical model that inspired this work.
	\medskip
    
  \noindent \textbf{Keywords:} 
  Mean Curvature Flow, Obstacle Problem, MBO, Diffusion Generated Motion, Thresholding, Viscosity Solution, Gradient Flow

  \medskip

\noindent \textbf{Mathematical Subject Classification (MSC2020)}:
53E10; 
74H15 (Primary); 
35D40; 
35R35; 
49Q05 (Secondary)
  \end{abstract}
\maketitle


\tableofcontents

\section{Introduction}\label{sec:intro}
Motivated by recent physical studies~\cite{martin2023domain} we introduce a novel approximation to mean curvature flow with obstacles. To that aim we adapt the famous Merriman-Bence-Osher (MBO) scheme~\cite{zbMATH00622264} that is well known to discretize mean curvature flow. The new scheme, although computationally cheap, inherits the properties of unconditional stability, monotonicity, has a minimizing movement interpretation and satisfies the obstacle constraints exactly. We prove convergence of our method to obstacle mean curvature flow in the viscosity sense. Furthermore, we show possibilities of space discretizations of the scheme and their convergence. 

The obstacle mean curvature flow describes the geometric evolution of a hypersurface whose normal velocity $V$ is given by its scalar mean curvature $H$, except at points where this motion would cause the surface to penetrate the obstacle. In such regions, the evolution is constrained so that the hypersurface may remain in contact with, but does not enter, the obstacle. Formally, this equations reads
\begin{equation*}
  V  = \begin{cases}
    \max\{0, -H\} & \text{on } \partial \Phi,\\
    -H & \text{in } (\Phi \cup \Psi)^c,\\
    \min\{0, -H\} & \text{on } \partial \Psi,
  \end{cases}
\end{equation*}
where $\Phi$ is the inner and $\Psi$ the outer obstacle (see Fig.~\ref{fig:init_cond}).

Obstacle mean curvature flow is a model flow related to phenomena in biology, for example cell motility, see \cite{elliott2012modelling}. Another very common flow in biology is positive mean curvature flow $V =  \max\{0, -H\}$ which only evolves the surface in positive curved parts and is for example a model for wound healing and tissue repairing~\cite{zbMATH05605510,zbMATH06112667}. This is a special case of obstacle mean curvature flow. Indeed, one can reconstruct any given positive mean curvature flow by placing obstacles precisely at the negatively curved regions \cite{zbMATH06099202}.

From a mathematical perspective there are several works regarding the existence, uniqueness and regularity of mean curvature flow with obstacles, see for example~\cite{zbMATH06524747, zbMATH06099202, zbMATH06841741}.

Our main motivation for introducing an efficient scheme for the approximation of obstacle mean curvature flow is the domain convexification model for invasion processes of Martin--Calle and Pierre--Louis~\cite{martin2023domain}. Their model describes for example spreading of species in ecological models, phase transition in magnetization or de-adhesion transition, which have in common that they are all to some extent curvature driven motions. In their work~\cite{martin2023domain} they are mainly interested in finding the density ratio of the initial conditions such that the flow floods the whole domain. 
Their initial condition is a union of disks that are randomly seeded in the plane.
To roughly approximate the positive curvature flow from the union of disks, they use that the flow converges to the (mean-)convex hull. Hence, they only compute the convex hull of clusters in every iteration of the scheme but not the dynamics that lead to it. We are interested in filling this gap by providing a simple scheme to efficiently compute the precise dynamics.

For given obstacles $\Phi, \Psi \subset \dom$ (where $\dom := \R^d$ or $\dom:= \mathbb{T}^d$) and an initial phase described by $\schvar^0: \dom \rightarrow \{-1,1\}$ we investigate the following iterative scheme.
\begin{itemize}
  \item[\textbf{Step 1}] Diffusion: Compute for fixed diffusion time $h$
  \begin{equation*}
    \schvar_h^{\ell +1/3} = e^{-h \Delta}\schvar^{\ell}
  \end{equation*}
  where we denote by $e^{-h \Delta}$ the heat semi-group (with the sign convention such that the Laplacian $\Delta$ has non-negative eigenvalues). 
  \item[\textbf{Step 2}] Threshold: Reconstruct a $\pm 1$-valued function by setting
  \begin{equation*}
    \schvar_h^{\ell + 2/3} =\begin{cases}
    1 & \text{if } e^{-h \Delta} \schvar_h^{\ell +1/3 } > 0,\\
    -1 & \text{otherwise}. 
    \end{cases}
  \end{equation*}
  \item[\textbf{Step 3}] Update on obstacles: Ensure that the obstacle constraint is satisfied by setting
  \begin{equation*}
    \schvar_h^{\ell +1}(x)  = \begin{cases}
      1 & \text{if } x \in \Phi,\\
      \schvar_h^{\ell + 2/3}(x) & \text{if } x \in \dom \setminus(\Phi \cup \Psi),\\
      -1 & \text{if } x \in \Psi.
    \end{cases} 
      \end{equation*}
\end{itemize}
Note that Step 3 of the scheme can also be written like
\begin{equation*}
\schvar_h^{\ell +1 } = \chi_\Phi \vee -\chi_\Psi \wedge \schvar^{\ell + 2/3},
\end{equation*}
where we denote
\begin{equation*}
  \chi_\Phi (x) := \begin{cases}
    1 & \text{if } x \in \Phi,\\
    -1 & \text{if } x \notin \Phi,
  \end{cases}
  , \quad  
  \chi_\Psi (x) := \begin{cases}
    1 & \text{if } x \in \Psi,\\
    -1 & \text{if } x \notin \Psi,
  \end{cases}
\end{equation*}
as well as $a \wedge b = \min\{a, b\}$ and $a \vee b = \max\{a, b\}$. 
If the obstacles are non-trivial, i.e. $\Phi \cup \Psi \neq \emptyset$, one can use the fact that $\schvar^{\ell + 1/3} \in (-1,1)$ and denote $\mathbbold{1}_S(x) := \begin{cases}
  1 & \text{if } x \in S,\\
  0 & \text{otherwise}
\end{cases}$  for any $S \subset \dom$ and $x \in \dom$ to shortly write the above scheme as 
\begin{equation}\label{def:scheme}
  \schvar^{\ell +1} = \sign(e^{-h \Delta} \schvar^{\ell} + \mathbbold{1}_\Phi - \mathbbold{1}_\Psi).
\end{equation}
One recognizes that the scheme without Step 3 is exactly the thresholding scheme of Merriman, Bence and Osher~\cite{zbMATH00622264}. The convergence of the thresholding scheme to mean curvature flow (as $h \downarrow 0$) is a well studied subject~\cite{zbMATH00550306, zbMATH00763426, zbMATH01336919,zbMATH06654682,zbMATH07161205, zbMATH07373006,
arXiv:2506.05946, zbMATH07896008, arXiv:2508.09064}. Hence, heuristically, the above scheme~\eqref{def:scheme} moves with a discrete mean curvature motion away from the obstacles while Step 3 guarantees the constraints imposed by the obstacles. So it should be of no surprise that the above scheme indeed converges to obstacle mean curvature flow. This will be proven later on in the viscosity solution setting, see Theorem~\ref{the:main}. One can view the scheme also from a different viewpoint to underline that our treatment of the obstacle is meaningful. In Lemma~\ref{lem:minmov} we show that adapting the minimizing movement interpretation of \cite{zbMATH06430104} to our problem yields that the scheme~\eqref{def:scheme} is equivalent to
\begin{equation*}
  \schvar^{\ell +1} \in \argmin_{\schvar:\dom \rightarrow \{-1,1\}} E_h(\schvar) + \frac{1}{2h}d_h^2(\schvar, \schvar^\ell) \quad \quad \text{ s.t. } \quad \quad \schvar \equiv 1 \text{ on } \Phi \text{ and } \schvar \equiv - 1 \text { on } \Psi,
\end{equation*}
with the energy and distance given by 
\begin{align}\label{def:energy}
  E_h(\schvar) &= \frac{1}{\sqrt{h}} \int_{\dom} (1- \schvar) e^{-h \Delta} (1 + \schvar) \, dx, \nonumber\\
  \frac{1}{2h}d_h^2(\schvar, \tilde{\schvar}) &= \frac{1}{\sqrt{h}} \int_{\dom} (\schvar- \tilde{\schvar}) e^{-h \Delta} (\schvar - \tilde{\schvar}) \, dx.
\end{align}
Remember, mean curvature flow is the gradient flow of  the perimeter and it is well known that $E_h$ converges to the perimeter functional. So, the minimizing movement interpretation reflects on the discrete level the gradient flow structure of obstacle mean curvature flow by simply adding the obstacles as constraints in each step. 
Using the minimizing movement interpretation to introduce constraints in the discrete flow has shown already to be sucessful in the case of volume constraints~\cite{zbMATH06841827, arXiv:2412.17694}.

Furthermore, the minimizing movement interpretation~\eqref{eq:minmov} immediately implies unconditional stability of the scheme. Another important property of the scheme is that because Step 1, Step 2 and Step 3 above are monotone, the whole scheme is monotone. This means, if $\schvar_h^\ell$ are the iterates with initial condition $\schvar^0$, if $\tilde{\schvar}_h^\ell$ the ones with $\tilde{\schvar}^0$ and if $\schvar^0 \leq \tilde{\schvar}^0$ then it holds $\schvar^\ell_h \leq \tilde{\schvar}^\ell_h$ for any $\ell \in \N$. We will use both of these structural properties to understand the asymptotic behavior of the scheme.

The rest of the paper is structured as follows. In Section~\ref{sec:convergence_time} we prove convergence of the scheme~\eqref{def:scheme} to the viscosity solution of obstacle mean curvature flow. There, the main results are given by Theorem~\ref{the:main} and Corollary~\ref{cor:conv_front}. They state, respectively, that the limit of~\eqref{def:scheme} is correct away from the possible fattened interface and convergence of the fronts in case of no fattening. Section~\ref{sec:convergence_space} shows for a big class of space discretizations the convergence to the space-continuous scheme. Finally, we apply in Section~\ref{sec:numerics} our space- and time-discretized scheme to simulate the invasion processes from~\cite{martin2023domain}.

\section{Convergence to Obstacle Mean Curvature Flow}\label{sec:convergence_time}
In the following we write $\Omega_0 := \{\schvar^0 = 1\}$, $\Gamma_0 := \partial \Omega_0$. Furthermore, we will assume that $\Omega_0, \Phi \subset \Omega_0, \Psi\subset \Omega_0^c$ are open, bounded and with $W^{2,\infty}$ boundary. 
Our aim is to prove the convergence of the obstacle MBO scheme~\eqref{def:scheme} to the viscosity solution of obstacle MCF. We follow the line of~\cite{zbMATH00205134, zbMATH06841741} for general properties of viscosity solutions of Obstacle Mean Curvature Flow. With these properties at hand, the remainder of the convergence proof is almost identical to the very elegant proof of~\cite{zbMATH00763426} for the MBO scheme without obstacles. 

Let us define $F: (\R^d\setminus \{0\}) \times \R^{d \times d}_{\text{sym}} \rightarrow \R$ by $F(p, X) := - \tr (X - \frac{p \otimes p}{|p|^2}X)$ which is geometric, i.e.,
\begin{equation}\label{def:geometric}
  F(\lambda p, \lambda X + \mu(p \otimes p)) = \lambda F(p,x) \quad \quad \text{ for any } \lambda > 0, \mu \in \R.
\end{equation}
Note that if the levelsets of a function $u:\dom \rightarrow \R$ are smooth and evolve by mean curvature flow, it holds 
\begin{equation} \label{eq:level_set_equation}
  \partial_t u + F(Du, D^2u) = 0 \quad \quad \text{ in } \dom \times (0,\infty).
\end{equation} 
In the following we will denote for any function $f$ the upper and lower semi-continuous envelope by 
\begin{equation*}
  f^*(x) := \limsup_{y \rightarrow x} f(y) \quad \quad \text{ and } \quad \quad f_*(x) := \liminf_{y \rightarrow x} f(y),
\end{equation*}
respectively. 
For any symmetric matrix $X$ there is no possibility to define $F(0,X)$ such that $F$ is continuous at this point. But independent by of the choice at $F(0,X)$ the lower semi-continuous envelope $F_*$ is given by
\begin{equation}\label{eq:lowerF}
  F_*(p,X) = \liminf_{\tilde{p} \rightarrow p} F(\tilde{p}, X) = \begin{cases}
    F(p,X) &\text{if } p \neq 0,\\
    - \tr(X) + \lambda_{\text{min}}(X) & \text{if } p = 0.
  \end{cases}
\end{equation} 
The upper semi-continuous envelope is similarly
\begin{equation*}
    F^*(p,X) = \limsup_{\tilde{p} \rightarrow p} F(\tilde{p}, X) = \begin{cases}
    F(p,X) &\text{if } p \neq 0,\\
    - \tr(X) + \lambda_{\text{max}}(X) & \text{if } p = 0.
    \end{cases}
\end{equation*}
Now we can define viscosity solutions for discontinuous Hamiltonians with obstacles $\phi, \psi$, see \cite{zbMATH06841741}.
\begin{definition}\label{def:sub_super_sol}
  We call a locally bounded function $u: \dom \rightarrow \R$ a viscosity subsolution of~\eqref{eq:level_set_equation} with obstacles $\phi, \psi: \dom \rightarrow \R$ if for every $\eta \in C^2(\dom)$ with local maximum at $(x_0,t_0) \in \dom \times [0,T)$ of $u^* - \eta$ it holds
  \begin{equation}\label{eq:subsol_eq}
    \max\{\min\{\partial_t \eta + F_*(D \eta, D^2 \eta), u^* - \phi^*\}, u^* - \psi^*\} \leq 0.
  \end{equation} 
  
  Similarly, a supersolution is defined by satisfying
  for every $\eta \in C^2(\dom)$, if $u_* - \eta$  has a local minimum at $(x_0,t_0) \in \dom \times [0,T)$ then
  \begin{equation}\label{eq:supersol_eq}
    \max\{\min\{\partial_t \eta + F^*(D \eta, D^2 \eta), u_* - \phi_*\}, u_* - \psi_*\} \geq 0.
  \end{equation} 
\end{definition}
\begin{definition}
  A locally bounded function $u:\dom \rightarrow \R$ is called a viscosity solution of~\eqref{eq:level_set_equation} with obstacles $\phi, \psi$ if it is a viscosity subsolution and a viscosity supersolution with respect to $\phi, \psi$. 
\end{definition}

\begin{rem}\label{rem:solution}
  Let us abbreviate $F = \partial_t u + F(D\phi, D^2 \phi)$ for this remark. Although the expression $\max\{\min\{F, u-\phi\}, u-\psi\}$ in~\eqref{eq:subsol_eq}, \eqref{eq:supersol_eq} looks asymmetric in $\phi, \psi$ at first glance, this is not the case. One checks, using $\phi \leq \psi$, the facts
  \begin{equation*}
    \max\{\min\{F, u-\phi\}, u-\psi\} \leq 0 \Leftrightarrow (F\leq 0 \vee u \leq \phi) \wedge u \leq \psi,
  \end{equation*}
  and 
  \begin{equation*}
    \max\{\min\{F, u-\phi\}, u-\psi\} \geq 0 \Leftrightarrow (F\geq 0 \vee u \geq \psi) \wedge u \geq \phi,
  \end{equation*}
  which show the symmetry in $\phi, \psi$. Here, one can also see nicely that every function $u$ with $u \leq \phi$ is a viscosity subsolution although the obstacle constraint $u \geq \phi$ might be violated. This flexibility of subsolutions will later be crucial for constructing test functions for the comparison principle.
\end{rem}
It was already proven in~\cite{zbMATH06841741} that there exists a unique viscosity solution to~\eqref{eq:level_set_equation} if one assumes enough regularity for the initial condition and the obstacles. 
\begin{theorem}[Theorem 4.2 in \cite{zbMATH06841741}]\label{the:existence}
  Let $u_0 \in W^{2, \infty}(\Omega)$ and $\phi, \psi \in W^{2,1}_\infty(\Omega \times [0, \infty))$. If $\phi \leq u_0 \leq \psi$ then there exist a unique viscosity solution $u \in C(\dom \times [0, \infty))$ of~\eqref{eq:level_set_equation} with 
  $u(x,0) = u_0(x)$ in $\dom$. 
\end{theorem}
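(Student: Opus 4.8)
The plan is to obtain uniqueness from a comparison principle and existence from Perron's method, following the framework for singular geometric equations of~\cite{zbMATH00205134} as adapted to the obstacle setting in~\cite{zbMATH06841741}. The decisive structural simplification is already visible in Remark~\ref{rem:solution}: every subsolution automatically satisfies $u^* \le \psi^*$ pointwise and every supersolution automatically satisfies $v_* \ge \phi_*$ pointwise. This means the two obstacles only ever act as one-sided pointwise constraints, while the differential inequality is switched on precisely where the constraint is strict. As I will argue, at the one point where comparison could fail, both obstacle terms are automatically inactive, so the whole problem collapses onto the unobstructed level-set mean curvature equation~\eqref{eq:level_set_equation}.

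First I would prove the comparison principle: if $u$ is a bounded viscosity subsolution and $v$ a bounded viscosity supersolution with $u^*(\cdot,0) \le v_*(\cdot,0)$, then $u^* \le v_*$ on $\dom \times [0,\infty)$ (on $\T^d$ this is immediate from compactness; on $\R^d$ one adds the usual localizing terms and uses boundedness). Arguing by contradiction, assume $\sup(u^* - v_*) > 0$ and double the variables, maximizing $u^*(x,t) - v_*(y,s) - \tfrac{1}{2\eps}|x-y|^2 - \tfrac{1}{2\delta}|t-s|^2$ over a compact set. At a maximizer $(\bar x,\bar t,\bar y,\bar s)$ the strict positivity of the maximum gives $u^*(\bar x,\bar t) > v_*(\bar y,\bar s)$; combined with the two pointwise facts above this forces $u^* > \phi^*$ at $(\bar x,\bar t)$ and $v_* < \psi^*$ at $(\bar y,\bar s)$ in the limit $\eps,\delta \downarrow 0$. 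Hence the inner $\min$ and outer $\max$ in~\eqref{eq:subsol_eq} and~\eqref{eq:supersol_eq} degenerate to the bare inequalities $\partial_t\eta + F_*(D\eta,D^2\eta) \le 0$ and $\partial_t\eta + F^*(D\eta,D^2\eta) \ge 0$, and from here the argument is purely about the geometric Hamiltonian $F$.

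From that point I would run the standard machinery: invoke the Crandall--Ishii theorem on sums to produce symmetric matrices $X \le Y$ playing the role of $D^2\eta$ for $u^*$ and $v_*$ respectively, with common first-order data $p = (\bar x - \bar y)/\eps$. When $p \ne 0$, degenerate ellipticity of $F$ together with its geometric scaling~\eqref{def:geometric} yields $F_*(p,X) \ge F^*(p,Y)$, which contradicts the two viscosity inequalities. The genuine obstacle—and the part specific to mean curvature flow rather than to the obstacles—is the singularity of $F$ at $p=0$, where $F^*$ and $F_*$ differ by $\lambda_{\max}(X) - \lambda_{\min}(X)$, cf.~\eqref{eq:lowerF}. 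This is resolved exactly as in~\cite{zbMATH00205134}: replacing the quadratic doubling penalty by a quartic term $|x-y|^4/\eps$ (or a suitable even modulus) guarantees that along any subsequence with $p \to 0$ the test matrices degenerate in the corresponding direction, so that the gap $\lambda_{\max}-\lambda_{\min}$ contributes nothing in the limit and no contradiction is lost. Thus comparison holds.

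Finally, for existence I would apply Perron's method. Using the $W^{2,\infty}$ regularity of $u_0$ and the $W^{2,1}_\infty$ regularity of $\phi,\psi$, I would construct global barriers of the form $w^\pm(x,t) = u_0(x) \pm C t$ (modified near the obstacles so that $\phi \le w^- \le w^+ \le \psi$ remains compatible with $\phi \le u_0 \le \psi$), which are respectively a super- and a subsolution for $C$ larger than the relevant curvature and obstacle bounds. Defining $u$ as the supremum of all subsolutions lying below $w^+$, the usual Perron argument shows $u^*$ is a subsolution and $u_*$ a supersolution; the comparison principle then gives $u^* \le u_*$, hence $u = u^* = u_*$ is continuous, and the barriers pin $u(\cdot,0) = u_0$. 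Uniqueness is again comparison. I expect the comparison principle, and specifically the treatment of the $p=0$ singularity, to be the main difficulty, whereas the obstacle terms are comparatively benign precisely because they decouple at the contradiction point.
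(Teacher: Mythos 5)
The paper does not prove this statement at all: it is imported verbatim as Theorem~4.2 of~\cite{zbMATH06841741}, so there is no in-paper argument to compare yours against. Judged on its own, your sketch follows the same route as the cited source (and its ancestors~\cite{zbMATH00205134}): a doubling-of-variables comparison principle in which the obstacle terms deactivate at the contradiction point, the $p=0$ singularity of $F$ handled by a quartic penalization through the theorem on sums, and existence by Perron's method with barriers $u_0\pm Ct$. The key structural observation --- that $u^*(\bar x,\bar t)>v_*(\bar y,\bar s)\ge\phi(\bar y,\bar s)$ and $v_*(\bar y,\bar s)<u^*(\bar x,\bar t)\le\psi(\bar x,\bar t)$ force both the inner $\min$ and the outer $\max$ in~\eqref{eq:subsol_eq}--\eqref{eq:supersol_eq} to select the differential inequality --- is exactly the right reduction, and it goes through because $\phi,\psi\in W^{2,1}_\infty$ are continuous, so the distinction between $\phi^*$ and $\phi_*$ is vacuous and uniform continuity converts the strict gap at $(\bar y,\bar s)$ into one at $(\bar x,\bar t)$ for small penalization parameters.

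Two points deserve more care than your sketch gives them. First, the claim that every subsolution ``automatically'' satisfies $u^*\le\psi^*$ \emph{pointwise} is not literally immediate from Definition~\ref{def:sub_super_sol}, which constrains $u^*$ only at points touched from above by a $C^2$ test function; you need the standard step of maximizing $u^*(\cdot)-\tfrac1\eps|\cdot-x_0|^2$ and passing to the limit using upper semicontinuity of $u^*$ and $\psi^*$ to propagate the bound to every point. Second, on $\dom=\R^d$ mere boundedness of the sub- and supersolutions is not enough for comparison for the level-set equation; one needs some control at infinity (e.g.\ that $u^*-v_*$ is nonpositive outside a compact set, or that both are constant near infinity), which is satisfied by the barriers and by the Perron supremum here but should be stated as a hypothesis of the comparison lemma rather than absorbed into ``the usual localizing terms.'' With those caveats the outline is sound and consistent with the actual proof in~\cite{zbMATH06841741}.
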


For any $\Gamma_0, \Phi, \Psi$ as given in the Introduction~\ref{sec:intro} (see Fig.~\ref{fig:init_cond}) we can construct in Theorem \ref{the:existence} an admissable initial condition $u_0 \in W^{2,\infty}(\dom)$ and obstacles $\phi, \psi \in W^{2,1}_\infty(\Omega \times [0, \infty))$ such that 
\begin{align}
  \Gamma_0 &= \{x\in \dom: u_0(x) = 0\}, \label{eq:level0u}\\
  \Phi &= \{x\in \dom: \phi(x) > 0\}, \label{eq:level0phi}\\
  \Psi &= \{x\in \dom: \psi(x) < 0\} \label{eq:level0psi}.
\end{align}
An example for such functions can be given by using $u_0 (x) = \sigma(\sdist(x, \Gamma_0)/\eps)$ for $\eps$ small enough (choose $\eps$ such that the nearest point projection of $\{x \in \dom: \dist(x, \Gamma_0) \leq \eps\}$ onto $\Gamma_0$ is unique) and 
\begin{equation*}
  \sigma \in C^\infty(\R), s \mapsto \begin{cases}
    1 & \text{if } s \geq 1,\\
    \frac{2}{1 + e^{\frac{s}{(1-s^2)(1+s)}}} - 1 & \text{if } -1< s< 1,\\
    -1 & \text{if } s \leq -1,
  \end{cases}
\end{equation*}
together with $\phi(x) = \sigma(\sdist(x, \Phi)/\eps)$ and $\psi(x) = -\sigma(\sdist(x, \Psi)/\eps)$. The equations~\eqref{eq:level0u}, \eqref{eq:level0phi}, and~\eqref{eq:level0psi} are immediately satisfied as $\sigma(s) = 0$ if and only if $s = 0$. As $\Phi$ is inside $\Gamma_0$ it holds $\sdist(\cdot, \Gamma_0) \geq \sdist(\cdot, \Phi)$ \big(for $x$ outside $\Gamma_0$ it holds $-\dist(x, \Gamma_0) \leq -\dist(\cdot, \Phi)$; for $x$ inside $\Phi$ it holds $\dist(x, \Gamma_0) \geq \dist(\cdot, \Phi)$; for $x$ inside $\Gamma_0$ but outside $\Phi$ it holds $\dist(x, \Gamma_0) \geq -\dist(\cdot, \Phi)$\big). Similarly, it holds $\sdist(\cdot, \Gamma_0) \leq \sdist(\cdot, \Psi)$ such that \[\phi \leq u_0 \leq \psi\] is satisfied. Furthermore, the three functions $u_0, \phi, \psi$ are in $W^{2, \infty}(\dom)$ as $\eps$ was chosen small enough to make the distance function as regular as $\Gamma_0, \Phi, \Psi$, respectively. Note that choosing $\eps$ small enough is possible because $\Phi, \Psi$ and $\Omega$ have non empty interior.

In the following we will denote by $u$ the unique viscosity solution to the just constructed obstacles $\psi, \phi$ and the initial condition $u_0$, which exists by Theorem~\ref{the:existence}. The idea behind the construction of $u$ is that $\Gamma_t = \{x \in \dom: u(x,t) = 0\}$ is independent of the choice of $u_0, \phi$ and  $\psi$, but solely depends on $\Phi,\Psi$ and $\Gamma_0$. This motivates the next Proposition~\ref{the:max_min_sub_super} stating the maximal and minimal sub- and supersolution, respectively, are simply characterized by the sign of $u$.
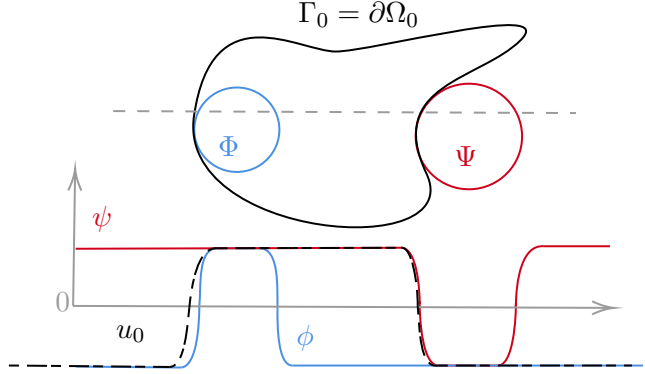
\begin{figure}
\tikzset{every picture/.style={line width=0.75pt}} 

\begin{tikzpicture}[x=0.75pt,y=0.75pt,yscale=-1,xscale=1]

\draw  [color={rgb, 255:red, 74; green, 144; blue, 226 }  ,draw opacity=1 ] (118.54,141.93) .. controls (118.36,130.2) and (127.72,120.55) .. (139.45,120.37) .. controls (151.17,120.19) and (160.82,129.55) .. (161,141.28) .. controls (161.18,153) and (151.83,162.65) .. (140.1,162.83) .. controls (128.37,163.01) and (118.72,153.65) .. (118.54,141.93) -- cycle ;
\draw  [color={rgb, 255:red, 208; green, 2; blue, 27 }  ,draw opacity=1 ] (229.05,145.41) .. controls (228.83,130.75) and (240.53,118.68) .. (255.19,118.46) .. controls (269.85,118.23) and (281.92,129.94) .. (282.15,144.6) .. controls (282.37,159.26) and (270.67,171.33) .. (256.01,171.55) .. controls (241.35,171.78) and (229.28,160.08) .. (229.05,145.41) -- cycle ;
\draw   (189.13,102.27) .. controls (207.13,102.27) and (301.4,76.6) .. (281.4,96.6) .. controls (261.4,116.6) and (213.4,118.6) .. (234.4,163.6) .. controls (255.4,208.6) and (109.4,196.6) .. (118.4,134.6) .. controls (127.4,72.6) and (171.13,102.27) .. (189.13,102.27) -- cycle ;
\draw [color={rgb, 255:red, 155; green, 155; blue, 155 }  ,draw opacity=1 ] [dash pattern={on 4.5pt off 4.5pt}]  (78.05,132.22) -- (309.05,133.22) ;
\draw [color={rgb, 255:red, 74; green, 144; blue, 226 }  ,draw opacity=1 ]   (130.05,201.22) -- (151.05,201.22) ;
\draw [color={rgb, 255:red, 74; green, 144; blue, 226 }  ,draw opacity=1 ]   (130.05,201.22) .. controls (120,201.3) and (122,220.3) .. (121.05,230.22) ;
\draw [color={rgb, 255:red, 74; green, 144; blue, 226 }  ,draw opacity=1 ]   (160.05,232.22) .. controls (160,220.3) and (161,201.3) .. (151.05,201.22) ;
\draw [color={rgb, 255:red, 74; green, 144; blue, 226 }  ,draw opacity=1 ]   (112.05,261.22) .. controls (120,260.3) and (121,239.3) .. (121.05,230.22) ;
\draw [color={rgb, 255:red, 74; green, 144; blue, 226 }  ,draw opacity=1 ]   (160.05,232.22) .. controls (160,239.3) and (160,260.3) .. (167.55,260.22) ;
\draw [color={rgb, 255:red, 208; green, 2; blue, 27 }  ,draw opacity=1 ]   (221.8,200.97) .. controls (229.8,200.97) and (231.05,222.22) .. (231.05,229.22) ;
\draw [color={rgb, 255:red, 208; green, 2; blue, 27 }  ,draw opacity=1 ]   (279.05,231.22) .. controls (279.05,217.22) and (282.05,199.22) .. (293.05,200.22) ;
\draw [color={rgb, 255:red, 208; green, 2; blue, 27 }  ,draw opacity=1 ]   (240.05,260.22) .. controls (230.05,259.22) and (231.05,222.22) .. (231.05,229.22) ;
\draw [color={rgb, 255:red, 208; green, 2; blue, 27 }  ,draw opacity=1 ]   (240.05,260.22) -- (270.05,260.22) ;
\draw [color={rgb, 255:red, 208; green, 2; blue, 27 }  ,draw opacity=1 ]   (279.05,231.22) .. controls (279.05,221.22) and (280.05,260.22) .. (270.05,260.22) ;
\draw [color={rgb, 255:red, 155; green, 155; blue, 155 }  ,draw opacity=1 ][line width=0.75]    (58.05,230.22) -- (326.05,231.21) ;
\draw [shift={(328.05,231.22)}, rotate = 180.21] [color={rgb, 255:red, 155; green, 155; blue, 155 }  ,draw opacity=1 ][line width=0.75]    (10.93,-3.29) .. controls (6.95,-1.4) and (3.31,-0.3) .. (0,0) .. controls (3.31,0.3) and (6.95,1.4) .. (10.93,3.29)   ;
\draw [color={rgb, 255:red, 155; green, 155; blue, 155 }  ,draw opacity=1 ][line width=0.75]    (58.05,230.22) -- (59.02,163.22) ;
\draw [shift={(59.05,161.22)}, rotate = 90.83] [color={rgb, 255:red, 155; green, 155; blue, 155 }  ,draw opacity=1 ][line width=0.75]    (10.93,-3.29) .. controls (6.95,-1.4) and (3.31,-0.3) .. (0,0) .. controls (3.31,0.3) and (6.95,1.4) .. (10.93,3.29)   ;
\draw [color={rgb, 255:red, 74; green, 144; blue, 226 }  ,draw opacity=1 ]   (59.55,260.72) -- (112.05,261.22) ;
\draw [color={rgb, 255:red, 74; green, 144; blue, 226 }  ,draw opacity=1 ]   (167.55,260.22) -- (342.55,260.22) ;
\draw [color={rgb, 255:red, 208; green, 2; blue, 27 }  ,draw opacity=1 ]   (59.3,201.47) -- (221.8,200.97) ;
\draw [color={rgb, 255:red, 208; green, 2; blue, 27 }  ,draw opacity=1 ]   (293.05,200.22) -- (326.05,200.22) ;
\draw [color={rgb, 255:red, 0; green, 0; blue, 0 }  ,draw opacity=1 ] [dash pattern={on 3.75pt off 3pt on 7.5pt off 1.5pt}]  (130.05,201.22) .. controls (120,200.3) and (117.08,220.62) .. (116.07,230.93) ;
\draw [color={rgb, 255:red, 0; green, 0; blue, 0 }  ,draw opacity=1 ] [dash pattern={on 3.75pt off 3pt on 7.5pt off 1.5pt}]  (116.07,230.93) .. controls (115.08,239.62) and (114.07,260.93) .. (106.07,260.93) ;
\draw [color={rgb, 255:red, 0; green, 0; blue, 0 }  ,draw opacity=1 ] [dash pattern={on 3.75pt off 3pt on 7.5pt off 1.5pt}]  (240.05,260.22) .. controls (231.08,261.72) and (231.05,242.22) .. (230.05,230.22) ;
\draw [color={rgb, 255:red, 0; green, 0; blue, 0 }  ,draw opacity=1 ] [dash pattern={on 6pt off 3pt on 7.5pt off 1.5pt}]  (230.05,230.22) .. controls (230.13,218.97) and (227.13,199.97) .. (221.8,200.97) ;
\draw  [dash pattern={on 3.75pt off 3pt on 7.5pt off 1.5pt}]  (130.05,201.22) -- (221.8,200.97) ;
\draw  [dash pattern={on 3.75pt off 3pt on 7.5pt off 1.5pt}]  (26,260.2) -- (106.07,260.93) ;
\draw  [dash pattern={on 3.75pt off 3pt on 7.5pt off 1.5pt}]  (240.05,260.22) -- (337,260.2) ;

\draw (41,219.4) node [anchor=north west][inner sep=0.75pt]    {$ \begin{array}{l}
\textcolor[rgb]{0.61,0.61,0.61}{0}\\
\end{array}$};
\draw (247,148.4) node [anchor=north west][inner sep=0.75pt]    {$\textcolor[rgb]{0.82,0.01,0.11}{\Psi }$};
\draw (129,143.4) node [anchor=north west][inner sep=0.75pt]    {$\textcolor[rgb]{0.29,0.56,0.89}{\Phi }$};
\draw (78,238.4) node [anchor=north west][inner sep=0.75pt]    {$u_{0}$};
\draw (169,76) node [anchor=north west][inner sep=0.75pt]    {$\Gamma _{0} = \partial \Omega_0$};
\draw (66,178.4) node [anchor=north west][inner sep=0.75pt]    {$\textcolor[rgb]{0.82,0.01,0.11}{\psi }$};
\draw (168,238.4) node [anchor=north west][inner sep=0.75pt]    {$\textcolor[rgb]{0.29,0.56,0.89}{\phi }$};

\end{tikzpicture}
\caption{Schematic illustration of the construction of initial conditions $u_0, \psi, \phi$.}
\label{fig:init_cond}
\end{figure}

\begin{proposition}[Maximal subsolution, minimal supersolution]\label{the:max_min_sub_super}
  Let $u_0 \in W^{2,\infty}(\dom)$ and $\phi,\psi \in W^{2,1}_\infty(\dom \times [0, \infty))$ be given and let $u$ be the unique viscosity solution from Theorem~\ref{the:existence}. For initial data $\mathbbold{1}_{\Omega_0} - \mathbbold{1}_{\Omega_0^c}$ and obstacles $\phi^{\Phi} := \sign(\sdist(\cdot,\Phi))$, $\psi^\Psi := -\sign(\sdist(\cdot,\Psi))$, the biggest viscosity subsolution of~\eqref{eq:level_set_equation} with $u_0, \phi^\Phi, \psi^\Psi$ is $\sign^*(u)$ and the smallest viscosity supersolution is $\sign_*(u)$, i.e., for any viscosity subsolution $v$ and any viscosity supersolution $w$ it holds
  \begin{align*}
    v(x,t) \leq \sign^*(u(x,t)) \quad \quad \text{for } (x,t) \in \dom \times (0, \infty),\\
    w(x,t) \geq \sign_*(u(x,t)) \quad \quad \text{for } (x,t) \in  \dom \times (0, \infty).
  \end{align*}
\end{proposition}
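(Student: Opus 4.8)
The plan is to establish the two (symmetric) assertions separately; I will carry out the argument for the maximal subsolution $\sign^*(u)$ and only indicate the sign changes needed for the minimal supersolution $\sign_*(u)$. Everything rests on two ingredients. The first is the invariance of the geometric equation~\eqref{eq:level_set_equation} under monotone relabeling of the unknown: since $F$ is geometric~\eqref{def:geometric}, composing a solution with a nondecreasing function again produces a sub- or supersolution, with the obstacles relabeled by the same function. The second is the comparison principle for~\eqref{eq:level_set_equation} with obstacles, the same one that yields uniqueness in Theorem~\ref{the:existence} (used here in its form for the enveloped, discontinuous obstacles $\phi^{\Phi},\psi^{\Psi}$). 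The first ingredient will exhibit $\sign^*(u)$ as a genuine subsolution of the $\pm1$ problem, and the second, applied to a family of perturbed barriers, will give its maximality.

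First I would check that the data are correct in the viscosity (enveloped) sense. Because $\{\phi>0\}=\Phi$, $\{\psi<0\}=\Psi$ and $\sigma$ has the same sign as its argument, one has $\sign^*(\phi)=\phi^{\Phi}$ and $\sign^*(\psi)=\psi^{\Psi}$ up to the null sets $\partial\Phi,\partial\Psi$, while $\{u_0>0\}=\Omega_0$ and $\{u_0=0\}=\Gamma_0$ give $\sign_*(u_0)=\mathbbold{1}_{\Omega_0}-\mathbbold{1}_{\Omega_0^c}$. For the differential inequality I would apply the monotone-composition principle to $u$: at a local maximum of $\sign^*(u)-\eta$ the level set of $u$ through that point supplies an admissible test function for $u$, and the geometricity~\eqref{def:geometric} transports the subsolution inequality~\eqref{eq:subsol_eq} from $u$ onto $\sign^*(u)$, the obstacle terms transforming correctly because $\sign^*$ is applied simultaneously to $u$, $\phi$ and $\psi$. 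This shows $\sign^*(u)$ is a subsolution with the prescribed initial and obstacle data.

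For maximality, let $v$ be any subsolution of the $\pm1$ problem; I must show $v\le\sign^*(u)$. A direct comparison is unavailable, since $\sign^*(u)$ is itself only a subsolution, and comparing $v$ with $\sign_*(u)$ breaks down at $t=0$ on $\Gamma_0$ in the enveloped sense. I would therefore perturb the initial set: for small $\delta>0$ put $\Omega_0^{\delta}:=\{\,\sdist(\cdot,\Gamma_0)>-\delta\,\}\supset\overline{\Omega_0}$ (smooth for $\delta$ small), keep the obstacles $\Phi,\Psi$ unchanged---so that $\Phi\subset\Omega_0^{\delta}$ and $\Omega_0^{\delta}\cap\Psi=\emptyset$---and let $u^{\delta}$ be the solution from Theorem~\ref{the:existence} with the enlarged initial datum. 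By the argument of the previous paragraph applied to $u^{\delta}$, the function $\sign_*(u^{\delta})$ is a supersolution of the $\pm1$ problem whose initial datum $\mathbbold{1}_{\Omega_0^{\delta}}-\mathbbold{1}_{(\Omega_0^{\delta})^c}$ dominates $v^*(\cdot,0)$ everywhere, in particular on $\Gamma_0\subset\Omega_0^{\delta}$. The comparison principle then gives $v\le\sign_*(u^{\delta})$. Finally, by continuous dependence $u^{\delta}\to u$ locally uniformly as $\delta\downarrow0$, so at each point with $u<0$ one has $u^{\delta}<0$ and hence $\sign_*(u^{\delta})=-1$ for small $\delta$, while $\sign^*(u)=1$ wherever $u\ge0$; thus $\limsup_{\delta\downarrow0}\sign_*(u^{\delta})\le\sign^*(u)$ and passing to the limit yields $v\le\sign^*(u)$. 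The minimal-supersolution claim is obtained verbatim by shrinking to $\Omega_0^{-\delta}\subset\Omega_0$ and using the subsolution barriers $\sign^*(u^{-\delta})$ with the reversed inequalities.

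I expect the real difficulty to sit in the second paragraph, that is, in justifying the monotone-composition principle \emph{across the jump of $\sign^*$ and in the presence of obstacles}. Away from $\{u=0\}$ the chain rule for viscosity solutions is classical, but at the jump one must check that no spurious term appears where $F_*$ and $F^*$ disagree (cf.~\eqref{eq:lowerF}) and that the enveloped obstacle constraints in~\eqref{eq:subsol_eq}, \eqref{eq:supersol_eq} survive the relabeling. By contrast, the perturbation in the third paragraph is routine once the comparison principle and the stability $u^{\delta}\to u$ are granted; there the only point needing care is that the enlarged initial datum stays admissible, i.e.\ $\phi\le u_0^{\delta}\le\psi$, which holds because the enlargement occurs in the region where $\phi\approx-1$ and $\psi\approx1$.
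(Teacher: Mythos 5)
Your overall architecture (first exhibit $\sign^*(u)$ as a subsolution via geometric invariance under monotone relabeling, then prove maximality by comparison against a family of barriers) matches the paper's, and your first half is essentially the paper's argument: the paper implements the ``monotone-composition across the jump'' that you flag as the main difficulty by composing $u$ with $\tanh((\cdot-\alpha)/\eps)$ and applying the Barles--Perthame stability theorem for semicontinuous envelopes twice (in $\eps$, then in $\alpha$), which simultaneously relabels the obstacles. So the right tool is the stability theorem, not a pointwise test-function transfer; ``the level set of $u$ through that point supplies an admissible test function'' is not a step that can be made literal for the discontinuous function $\sign^*(u)$.

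The genuine gap is in your maximality argument. You invoke ``the comparison principle'' to compare the subsolution $v$ with $\sign_*(u^{\delta})$. But both of these are discontinuous $\{\pm1\}$-valued functions, and moreover $v$ is a subsolution only with respect to the discontinuous obstacles $\phi^{\Phi},\psi^{\Psi}$; the comparison principle of \cite{zbMATH06841741} that underlies Theorem~\ref{the:existence} requires $W^{2,1}_\infty$ obstacles and is applied in the paper only when one of the two functions being compared is \emph{continuous}. Your perturbation of $\Omega_0$ fixes the initial-time ordering on $\Gamma_0$, but it does nothing to remove the discontinuity of the supersolution barrier or of the obstacles, so the comparison step as written is not justified. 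The paper resolves exactly this by a different regularization: it takes smooth nondecreasing $\xi_n$ with $\xi_n\equiv 1$ on $[0,\infty)$ and $\inf_n\xi_n=-1$ on $(-\infty,0)$, so that $\xi_n(u)$ is a \emph{continuous} supersolution with \emph{smooth} obstacles $\xi_n(\phi),\xi_n(\psi)$ and initial datum $\xi_n(u_0)\ge v^*(\cdot,0)$ (no perturbation of $\Omega_0$ is needed, since $\xi_n(u_0)=1$ on all of $\overline{\Omega_0}$); the key observation making this legitimate is Remark~\ref{rem:solution}, namely that $v$ remains a subsolution when the obstacles are enlarged from $\phi^{\Phi},\psi^{\Psi}$ to $\xi_n(\phi),\xi_n(\psi)\ge\sign(\phi),\sign(\psi)$. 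Then $v\le\xi_n(u)$ for every $n$ and $\inf_n\xi_n(u)=\sign^*(u)$. To repair your version you would have to either prove a comparison principle for discontinuous sub- and supersolutions with discontinuous obstacles (which is not available here) or insert a $\xi_n$-type mollification of the barrier --- at which point the initial-set perturbation and the continuous-dependence step become unnecessary.
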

\begin{proof}
 For $\alpha \in \R$ and $\eps >0$ set 
 \begin{equation*}
  u^\eps(x,t) := \tanh((u(x,t)-\alpha)/\eps).
 \end{equation*}
 Then, because of the geometric property~\eqref{def:geometric}, $u^\eps$ is a viscosity solution of~\eqref{eq:level_set_equation} with obstacles $\phi^\eps(x) = \tanh((\phi(x)-\alpha)/\eps)$ and $\psi^\eps(x) = \tanh((\psi(x)-\alpha)/\eps)$.

 Theorem A.2 of the Appendix of~\cite{zbMATH04024130} states stability of viscosity solutions of discontinuous Hamiltonians. In our case this yields that $u_\alpha(x,t) := \limsup_{\eps \downarrow 0, (y,s) \rightarrow (x,t)} u^\eps(y,s)$ is a viscosity subsolution of~\eqref{eq:level_set_equation} with obstacles $\phi_\alpha(x) := \limsup_{\eps \downarrow 0, y \rightarrow x} \phi^\eps(y)$ and $\psi_\alpha(x) := \limsup_{\eps \downarrow 0, y \rightarrow x} \psi^\eps(y)$. The properties of the hyperbolic tangent yield 
\begin{align*}
  u_\alpha(x,t) &= \begin{cases}
    1 & \text{if } u(x,t) > \alpha \text{ or } (x,t) \in \partial\{u > \alpha \},\\
    -1 & \text{if } u(x,t) < \alpha,\\
    0 & \text{if } (x,t) \in \Int\{u = \alpha\} \text{ or } (x,t) \in \partial\{u < \alpha \}\setminus \partial\{u > \alpha\},
  \end{cases}\\
  \phi_\alpha(x) &= \begin{cases}
    1 & \text{if } \phi(x) > \alpha \text{ or } x \in \partial\{\phi > \alpha \},\\
    -1 & \text{if } \phi(x) < \alpha,\\
    0 & \text{if } x \in \Int\{\phi = \alpha\} \text{ or } x \in \partial\{\phi < \alpha \}\setminus \partial\{\phi > \alpha\},
  \end{cases}\\
  \psi_\alpha(x) &= \begin{cases}
    1 & \text{if } \psi(x) > \alpha \text{ or } x \in \partial\{\psi > \alpha \},\\
    -1 & \text{if } \psi(x) < \alpha,\\
    0 & \text{if } x \in \Int\{\psi = \alpha\} \text{ or } x \in \partial\{\psi < \alpha \}\setminus \partial\{\psi > \alpha\}.
  \end{cases}
\end{align*}
A similar result holds for the supersolution. Apply the stability of~\cite{zbMATH04024130} once more to obtain that  $u^\infty(x,t) = \limsup_{\alpha \uparrow 0, (y,t) \rightarrow (x,t) } u_\alpha (y,t)$ is a subsolution with obstacles $\phi^\infty(x) = \limsup_{\alpha \uparrow 0, y \rightarrow x} \phi_\alpha(y)$ and $\psi^\infty(x) = \limsup_{\alpha \uparrow 0, y \rightarrow x} \psi_\alpha(y)$. Furthermore, one has 
\begin{equation*}
  u^\infty = \sign^*(u), \quad \text{ and } \quad  \phi^\infty = \sign^*(\phi) = (\phi^\Phi)^*,
\end{equation*} 
which implies that $u^\infty$ has initial conditions $\mathbbold{1}_{\Omega_0} - \mathbbold{1}_{\Omega_0^c}$ and respects the obstacles $\phi^\Phi$ and $\psi^\Psi$. 

So far we have shown that $\sign^*(u)$ is a subsolution; it remains to prove that it is the biggest subsolution by applying the comparison principle~\cite[Theorem 2.2]{zbMATH06841741}. To this end, take a sequence of smooth functions $\{\xi_n\}_n$ such that $\xi_n \equiv 1$ on $[0, \infty), \xi'_n \geq 0$ in $\R$, $\xi_n(\R) \subset [-1,1]$ and $\inf_n\xi_n = -1$ on $(-\infty,0)$. As the generic subsolution $v$ has initial values $\mathbbold{1}_{\Omega_0} - \mathbbold{1}_{\Omega_0^c}$ it follows $v^*(x,0) \leq \xi_n(u_0(x))$. Property~\eqref{def:geometric} ensures that $\xi_n(u)$ is a viscosity solution (so in particular a supersolution) with initial data $\xi_n(u_0) \geq v^*(\cdot,0)$ and obstacles $\xi_n(\phi), \xi_n(\psi)$. Note that, by Remark~\ref{rem:solution}, $v$ is also a subsolution with respect to the obstacles $\xi_n(\phi), \xi_n(\psi)$ as it holds $\xi_n \geq \sign$ which implies
\begin{align*}
   \phi^{\Phi} = -\sign(\sdist(\Phi)) = \sign(\phi) \leq \xi_n(\phi) \quad \text{ and } \quad \psi^{\Psi} = \sign(\sdist(\Psi)) = \sign(\psi) \leq \xi_n(\psi).
\end{align*}
So, for every $n \in \N$ the comparison principle ~\cite[Theorem 2.2]{zbMATH06841741} is applicable. Hence, we conclude 
\begin{equation*}
  v \leq v^* \leq \inf_n \xi_n(u) = \sign^*(u).
\end{equation*}
The proof for the statement regarding the supersolution $v$ is omitted, but very similar.
\end{proof}

For the next proposition we will use the notation $\limsup^* u_h := \limsup_{(y, \ell h) \rightarrow (x,t)} u_h(y, \ell h)$ and $\liminf^* u_h := \liminf_{(y, \ell h) \rightarrow (x,t)} u_h(y, \ell h)$, where we implicitely mean that $h \downarrow 0$ in the limits.
\begin{proposition}\label{prop:u_hsubsuper}
  The functions $\limsup^*u_h$ and $\liminf_* u_h$ are, respectively sub- and supersolutions of~\eqref{eq:level_set_equation} with initial conditions $\mathbbold{1}_{\Omega_0} - \mathbbold{1}_{\Omega_0^c}$ and obstacles $\psi^\Psi, \phi^\Phi$.
\end{proposition}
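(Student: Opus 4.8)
The plan is to prove that $\bar u := \limsup^* u_h$ is a subsolution; the supersolution property of $\ubar u := \liminf_* u_h$ follows by the symmetric argument, reversing all inequalities, replacing $F_*$ by $F^*$, and interchanging the roles of the two obstacles. Fix $\eta \in C^2(\dom)$ so that $\bar u - \eta$ has a strict local maximum at $(x_0,t_0) \in \dom \times (0,\infty)$; since $\bar u$ is already upper semicontinuous we have $\bar u^* = \bar u$. By the first equivalence in Remark~\ref{rem:solution}, the subsolution inequality~\eqref{eq:subsol_eq} with obstacles $\phi^\Phi, \psi^\Psi$ is equivalent to the two requirements
\[ \bar u(x_0,t_0) \le (\psi^\Psi)^*(x_0) \qquad \text{and} \qquad \Big( \partial_t\eta + F_*(D\eta, D^2\eta) \le 0 \ \text{ or } \ \bar u(x_0,t_0) \le (\phi^\Phi)^*(x_0) \Big), \]
the left-hand sides being evaluated at $(x_0,t_0)$. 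The point is that the obstacle terms only ever push $u_h$ in the direction that is favourable for a subsolution, so the entire analysis reduces to the plain MBO consistency of~\cite{zbMATH00763426}.

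First I would dispose of the obstacle part. Step~3 pins $u_h \equiv -1$ on $\Psi$ for every $h$ and every iterate, and $|u_h| \le 1$; passing to the relaxed upper limit yields $\bar u \equiv -1$ on $\Int\Psi$, while $(\psi^\Psi)^* = 1 \ge \bar u$ on $\dom \setminus \Int\Psi$, so $\bar u \le (\psi^\Psi)^*$ holds at every point. For the disjunction, recall $(\phi^\Phi)^* = 1$ on $\overline{\Phi}$; hence if $x_0 \in \overline{\Phi}$ then $\bar u(x_0,t_0) \le 1 = (\phi^\Phi)^*(x_0)$ and the second alternative holds. This leaves the case $x_0 \notin \overline{\Phi}$, in which I must prove the differential inequality.

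If $x_0 \notin \overline{\Phi}$ then $\mathbbold{1}_\Phi \equiv 0$ on a neighborhood of $x_0$, and there the scheme~\eqref{def:scheme} satisfies $u_h^{\ell+1} = \sign(e^{-h\Delta}u_h^\ell - \mathbbold{1}_\Psi) \le \sign(e^{-h\Delta}u_h^\ell)$ by monotonicity and $\mathbbold{1}_\Psi \ge 0$; that is, $u_h$ is locally a subsolution of the unconstrained MBO step. From here the argument is identical to~\cite{zbMATH00763426}. Assuming first $D\eta(x_0,t_0) \neq 0$, suppose for contradiction $\partial_t\eta + F(D\eta, D^2\eta) > 0$ at $(x_0,t_0)$. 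Choosing near-maximizers $(x_h, \ell_h h) \to (x_0,t_0)$ with $u_h(x_h, \ell_h h) \to \bar u(x_0,t_0)$, using the local bound $u_h^{\ell_h - 1} \le \eta(\cdot,\ell_h h - h) + o(1)$ together with the monotonicity of $e^{-h\Delta}$ and of $\sign$, and inserting the heat-kernel consistency estimate
\[ e^{-h\Delta}\sign(\zeta)(x_0) = \tfrac{1}{\sqrt\pi}\,\sqrt h\, F(D\zeta, D^2\zeta)(x_0) + o(\sqrt h) \]
for smooth $\zeta$ with $\zeta(x_0) = 0$ and $D\zeta(x_0) \neq 0$ (equivalently, one thresholding step moves the $0$-level set of a smooth function normally by $h\,F(D\eta,D^2\eta) + o(h)$), one propagates the touching inequality one time step forward and lets $h \downarrow 0$, reaching a contradiction. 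This gives $\partial_t\eta + F(D\eta, D^2\eta) \le 0$.

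The degenerate case $D\eta(x_0,t_0) = 0$ is treated by the usual perturbation trick: adding a term such as $\delta\,|x - x_0|^4 + \delta\,(t - t_0)^2$, reducing to the non-degenerate case, and letting $\delta \downarrow 0$ produces exactly the lower envelope $F_*$, whose value $-\tr(X) + \lambda_{\min}(X)$ at $p = 0$ (see~\eqref{eq:lowerF}) is what the perturbation yields. The initial condition needs no separate argument: because $u_h(\cdot,0) = \mathbbold{1}_{\Omega_0} - \mathbbold{1}_{\Omega_0^c}$ is independent of $h$, one immediately has $\bar u(\cdot,0) \le (\mathbbold{1}_{\Omega_0} - \mathbbold{1}_{\Omega_0^c})^*$ (and $\ubar u(\cdot,0) \ge (\mathbbold{1}_{\Omega_0} - \mathbbold{1}_{\Omega_0^c})_*$ for the supersolution), which is the relaxed initial datum required at $t_0 = 0$. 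I expect the main obstacle to be the heat-kernel consistency estimate and the way it must be fed through the maximum condition via monotonicity, together with the degenerate-gradient case — this is the technical core imported from~\cite{zbMATH00763426}, while the obstacle terms contribute nothing beyond the elementary monotonicity observation of the second and third paragraphs.
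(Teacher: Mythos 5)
Your argument is correct and follows essentially the same route as the paper: dispose of the obstacle terms pointwise using Step~3 of the scheme and the disjunctive form of the subsolution condition from Remark~\ref{rem:solution}, then run the touching/monotonicity/consistency argument of~\cite{zbMATH00763426} away from the inner obstacle (your observation that the $\Psi$-truncation is one-sided in the favourable direction lets you excise only $\overline{\Phi}$, whereas the paper excises $\overline{\Phi}\cup\overline{\Psi}$ and works with the exact unconstrained step there; both work). The one inaccuracy is your displayed consistency expansion, which should read $e^{-h\Delta}\sign(\zeta)(x_0) = -\tfrac{\sqrt{h}}{\sqrt{\pi}\,|D\zeta(x_0)|}\,F(D\zeta,D^2\zeta)(x_0)+o(\sqrt{h})$ (note the sign and the normalization by $|D\zeta|$); since you import this from~\cite{zbMATH00763426} it does not affect the structure of the proof, but as written the contradiction would come out with the wrong orientation.
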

\begin{proof}
This proof is basically exactly like in~\cite{zbMATH00763426}; we sketch the proof for $\bar{u} := \limsup_h^* u_h$ for the convenience of the reader. 

Let a test function $\eta \in C^2(\dom \times [0, \infty))$ and $(x,t) \in \dom \times (0, \infty)$ a strict global maximum of $\bar{u} - \eta$ be given. As $\bar{u} \in \{-1,1\}$ is upper semi-continuous, it follows from $\bar{u}(x,t) = -1$ that $\bar{u} \equiv -1$ in a neighborhood around $(x,t)$. Hence, 
\begin{equation*}
  D \eta (x,t) = 0, \quad D^2\eta(x,t) \geq 0, \quad \partial_t \eta (x,t) = 0,
\end{equation*}
holds true which immediately implies 
\begin{equation*}
  \partial_t \eta (x,t) + F_*(D\eta(x,t), D^2 \eta(x,t)) = \partial_t \eta (x,t) - \Delta \eta(x,t) + \lambda_{\text{min}}(D^2 \eta(x,t)) \leq 0.
\end{equation*}
Similarly, one reasons in the interior of the set $\{\bar{u} = 1\}$. Furthermore, by the definition of the scheme~\eqref{def:scheme} it immediately follows that $(\phi^\Phi)^* \leq \bar{u} \leq (\psi^\Psi)^*$ and $\bar{u} = \phi^\Phi$ on $\Phi$ as well as $\bar{u} = \psi^\Psi$ on $\Psi$. Therefore, one can assume that $(x,t)$ is in $\partial\{\bar{u} = 1\} \setminus( \Psi \cup \Phi)$. On top of that it also holds $\bar{u} = (\phi^{\Phi})^* = (\psi^\Psi)^* = 1$ on $\partial \Phi \cup \partial \Psi$ such that we can even assume $(x,t)$ is in $\partial\{\bar{u} = 1\} \setminus( \bar{\Phi} \cup \bar{\Psi})$.

One starts by noticing that $\limsup^* u_h = \limsup^* u_h^*$ so that one can use the latter. Using~\cite[Lemma A.3]{zbMATH04024130} and by assuming w.l.o.g.~$\liminf_{|x|, t \rightarrow \infty} \eta(x,t) = \infty$, there exists  a subsequence $(x_h, \ell_h h) \rightarrow (x,t)$ with 
\begin{equation*}
  u_h^*(x_h, \ell_h h) - \eta(x_h, \ell_h h) = \max_{\dom \times h\N} (u_h^* - \eta), \quad \quad \text{ and } \quad \quad u_h^*(x_h, \ell_h h ) \rightarrow u^*(x,t) = 1.
\end{equation*}
Due to $u_h \in \{-1,1\}$ the latter implies $u_h^*(x_h, \ell_h h) = 1$ for $h$ small enough and at the maximum point one thus has
\begin{equation*}
  u_h^*(x, \ell h) \leq 1 - \eta(x_h, \ell_h h) + \eta(x, \ell h) \quad \quad \text{ for any } (x,\ell) \in \dom \times \N.
\end{equation*}
Checking both cases $u_h^*(x, \ell h) = \pm 1$ one justifies that this implies 
\begin{equation*}
  u_h^*(x, \ell h) \leq \sign^*(\eta(x, \ell h ) - \eta (x_h, \ell_h h)) \quad \quad \text{ for any } (x, \ell) \in \dom \times \N.
\end{equation*}
Because $\Omega\setminus(\bar{\Phi} \cup \bar{\Psi})$ is open it follows from $x \in \Omega\setminus(\bar{\Phi} \cup \bar{\Psi})$ that we can assume w.l.o.g.~$x_h \in  \Omega\setminus(\bar{\Phi} \cup \bar{\Psi})$. The update by the scheme~\eqref{def:scheme} for any $x \in \Omega\setminus(\bar{\Phi} \cup \bar{\Psi})$ is 
\begin{equation*}
  u_h(x, \ell_h h) = \sign_*(e^{-h \Delta}u_h (x, (\ell_h - 1)h)),
\end{equation*}
which immediately implies 
\begin{equation*}
    u_h^*(x, \ell_h h) \leq \sign^*(e^{-h \Delta}u_h^* (x, (\ell_h - 1)h)).
\end{equation*}
Assembling the previous steps for $x = x_h$ yields 
\begin{align*}
  1 = u_h^*(x_h, \ell_h h) &\leq \sign^*(e^{-h \Delta}u_h^* (x_h, (\ell_h - 1)h)) \\
  &\leq \sign^*\big(e^{-h \Delta} \big[\sign^*(\eta(\cdot, (\ell_h-1) h ) - \eta (x_h, \ell_h h))\big](x_h)\big)
\end{align*}
which is equivalent to 
\begin{equation*}
  e^{-h \Delta} \big[\sign^*(\eta(\cdot, (\ell_h-1) h ) - \eta (x_h, \ell_h h))\big](x_h) \geq 0.
\end{equation*}
As $e^{-h \Delta} 1 = 1$ this can again be reformulated as 
\begin{equation*}
  \frac{1}{(4\pi h)^{d/2}} \int_{\{\eta(y, (\ell -1)h) - \eta(x_h, \ell_h h) \geq 0 \}} e^{-\frac{|x_h - y|^2}{4h}} \, dy \geq \frac{1}{2}.
\end{equation*}
For the case $D\eta(x,t) \neq 0$ one applies the consistency result~\cite[Proposition 4.1]{zbMATH00763426} with $\eta_h(x,t ) = \eta(x,t) - \eta(x_h, \ell_h h)$ to conclude with~\eqref{eq:lowerF} that
\begin{align*}
  0 &\geq 2\sqrt{\pi} |D\eta(x,t)| \liminf_{h \downarrow 0} \left(\frac{1}{2} - \frac{1}{(4\pi h)^{d/2}} \int_{\eta_h(y, (\ell -1)h) \geq 0} e^{- \frac{|x_h - y|^2}{4h}} \, dy \right)\\
  &\geq \partial_t\eta(x,t) + F_*(D\eta(x,t), D^2 \eta(x,t)).
\end{align*}
Furthermore, if $D \eta(x,t) = 0$, by~\cite[Proposition 2.2]{zbMATH00763426} it is enough to check the case $D^2 \eta(x,t)= 0$. By~\cite[Proposition 4.1]{zbMATH00763426} it holds $\partial_t \eta(x,t) \leq 0$, so that indeed
\begin{equation*}
  0 \geq \partial_t \eta(x,t)= \partial_t \eta(x,t)  + F_*(D\eta(x,t), D^2 \eta(x,t)).
\end{equation*}
As the obstacles constraints are satisfied this finishes the proof for the subsolution.
\end{proof}

Having done the main work in the two previous propositions the main result of this section follows immediately. 
\begin{theorem}\label{the:main}
  Let $\Omega_t := \{x \in \dom: u(x,t) > 0\}$ and $\Gamma_t := \{x \in \dom: u(x,t) = 0\}$, then, for any $t> 0$ it holds
  \begin{equation*}
    {\liminf}_* u_h(x,t) = 1 \quad \text{ in } \Omega_t, \quad \text{ and } \quad {\limsup}^*u_h(x,t) = -1 \quad \text{ in } (\Omega_t \cup \Gamma_t)^c .
  \end{equation*}
\end{theorem}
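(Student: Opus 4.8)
The plan is to read the conclusion off directly from the two preceding propositions by a comparison argument, so that the only genuine computation left is to evaluate the semicontinuous envelopes of $\sign$ on the relevant regions. First I would record the trivial two-sided bound
\begin{equation*}
  -1 \leq {\liminf}_* u_h(x,t) \leq {\limsup}^* u_h(x,t) \leq 1 \qquad \text{on } \dom \times (0,\infty),
\end{equation*}
which holds simply because every iterate satisfies $u_h \in \{-1,1\}$.

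Next I would invoke Proposition~\ref{prop:u_hsubsuper}: the function ${\limsup}^* u_h$ is a viscosity subsolution and ${\liminf}_* u_h$ a viscosity supersolution of~\eqref{eq:level_set_equation} with initial data $\mathbbold{1}_{\Omega_0} - \mathbbold{1}_{\Omega_0^c}$ and obstacles $\phi^\Phi, \psi^\Psi$. This is \emph{precisely} the data for which Proposition~\ref{the:max_min_sub_super} identifies $\sign^*(u)$ as the maximal subsolution and $\sign_*(u)$ as the minimal supersolution. Comparing each iterate-envelope against the corresponding extremal solution from that proposition therefore yields, for all $(x,t) \in \dom \times (0,\infty)$,
\begin{equation*}
  {\liminf}_* u_h(x,t) \geq \sign_*(u(x,t)) \quad \text{and} \quad {\limsup}^* u_h(x,t) \leq \sign^*(u(x,t)).
\end{equation*}

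It then remains to evaluate the envelopes. A direct computation gives $\sign_*(s) = 1$ exactly when $s > 0$ and $\sign^*(s) = -1$ exactly when $s < 0$. Hence on $\Omega_t = \{u(\cdot,t) > 0\}$ one has $\sign_*(u(x,t)) = 1$, and combining the lower bound ${\liminf}_* u_h \geq 1$ with the trivial upper bound ${\liminf}_* u_h \leq 1$ forces ${\liminf}_* u_h(x,t) = 1$ there. Symmetrically, on $(\Omega_t \cup \Gamma_t)^c = \{u(\cdot,t) < 0\}$ one has $\sign^*(u(x,t)) = -1$, and together with ${\limsup}^* u_h \geq -1$ this yields ${\limsup}^* u_h(x,t) = -1$, which is the claim.

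I do not expect a genuine obstacle at this stage, since all the difficulty has been absorbed into Propositions~\ref{the:max_min_sub_super} and~\ref{prop:u_hsubsuper}; the work is purely in aligning the two statements. The single point requiring care is bookkeeping: one must check that the initial data and obstacle pair $(\phi^\Phi,\psi^\Psi)$ for which the iterate-envelopes are sub/supersolutions is the \emph{same} pair for which $\sign^*(u),\sign_*(u)$ are extremal, so that the comparison applies with no intermediate adjustment. The explicit evaluation of $\sign_*$ and $\sign^*$ at $0$ is also worth stating, because it is exactly the mismatch $\sign_*(0) = -1 \neq 1 = \sign^*(0)$ that renders both comparison inequalities vacuous on $\Gamma_t = \{u(\cdot,t) = 0\}$; this is why the conclusion is confined to $\Omega_t$ and to the complement of the possibly fattened interface $\Omega_t \cup \Gamma_t$, and leaves no information on $\Gamma_t$ itself.
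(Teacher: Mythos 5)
Your argument is correct and is exactly the paper's proof: combine Proposition~\ref{prop:u_hsubsuper} with the extremality statement of Proposition~\ref{the:max_min_sub_super} to get ${\limsup}^* u_h \leq \sign^*(u)$ and ${\liminf}_* u_h \geq \sign_*(u)$, then conclude using that $u_h$ is $\{-1,1\}$-valued. Your extra remarks on evaluating $\sign_*$, $\sign^*$ at $0$ and on why $\Gamma_t$ is excluded are accurate elaborations of the same reasoning.
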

\begin{proof}
  Combing Proposition~\ref{the:max_min_sub_super} with Proposition~\ref{prop:u_hsubsuper} immediately yields
  \begin{equation*}
    {\limsup}^* u_h \leq \sign^*(u) \quad \text{ and } \quad {\liminf}_* u_h \geq \sign_*(u) \text{ in } \dom \times (0, +\infty).
  \end{equation*}
  This implies the result as $u_h$ has values in $\{-1,1\}$.
\end{proof}

The next result is basically exactly as in \cite[Corollary 1.3]{zbMATH00763426}. One only needs to care about compactness in time, as the flow might not converge to a steady state in final time.
\begin{corollary}\label{cor:conv_front}
  For any finite time horizon $T > 0$, if $\Omega_0$ is bounded and $\cup_{t = 0}^T \Gamma_t \times \{t\} = \partial \{(x,t): u(x,t) > 0\} = \partial \{(x,t): u(x,t) < 0\}$, then $F^h := \cup_{\ell = 0}^{\lfloor\frac{T}{h}\rfloor} \partial\{x \in \dom: u_h(x, \ell h) = 1\} \times \{\ell h\}$ converges to $F:= \cup_{t = 0}^T \Gamma_t \times \{t\}$ in the Hausdorff distance. 
\end{corollary}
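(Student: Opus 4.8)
The plan is to follow the strategy of \cite{zbMATH00763426} and reduce the Hausdorff convergence to the two one-sided estimates $\sup_{z\in F^h}\dist(z,F)\to0$ and $\sup_{z\in F}\dist(z,F^h)\to0$ as $h\downarrow0$. Before addressing either, I would first secure the compactness that makes these quantities meaningful on the finite horizon $[0,T]$; this is the genuinely new point compared with the extinction setting of \cite{zbMATH00763426}. Since $\Omega_0$ is bounded, pick $R$ with $\bar\Omega_0\subset B_R$ and compare the scheme started from $\mathbbold{1}_{\Omega_0}-\mathbbold{1}_{\Omega_0^c}$ with the one started from $\mathbbold{1}_{B_R}-\mathbbold{1}_{B_R^c}$. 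By the monotonicity of~\eqref{def:scheme}, together with the fact that enforcing the obstacle constraints ($\Phi\subset B_R$, $+1$ on $\Phi$; $-1$ on $\Psi$) and thresholding can only shrink a large ball, one gets $\{u_h(\cdot,\ell h)=1\}\subset B_R$ for all $\ell h\le T$ and all small $h$. The same barrier bounds the continuous interface, so $F$ and all $F^h$ lie in the fixed compact set $K:=\bar B_R\times[0,T]$ and limits of front points can be extracted. The only endpoints needing attention are $\ell=0$, where $\{u_h(\cdot,0)=1\}=\Omega_0$ so that the discrete front coincides with $\Gamma_0\times\{0\}\subset F$, and the moving endpoint $t=T$, which is reached since $\lfloor T/h\rfloor h\to T$.

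For the first estimate I would argue by contradiction: if $z_k=(x_k,\ell_k h_k)\in F^{h_k}$ satisfy $\dist(z_k,F)\ge\eps$, extract $z_k\to z_*\in K\setminus F$. By the no-fattening hypothesis the complement of $F$ is the open set $\{u>0\}\cup\{u<0\}$, so $z_*$ lies in one phase, say $\{u>0\}$. Theorem~\ref{the:main} then gives ${\liminf}_*u_h(z_*)=1$, i.e.\ $u_{h_k}\equiv1$ on a full space-time neighborhood of $z_*$ for large $k$; in particular $x_k$ becomes an interior point of $\{u_{h_k}(\cdot,\ell_k h_k)=1\}$, contradicting $x_k\in\partial\{u_{h_k}(\cdot,\ell_k h_k)=1\}$. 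The phase $\{u<0\}$ is symmetric, and points accumulating at $t=0$ are handled by the exact initial matching above.

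The second estimate is the heart of the proof. Suppose $z_k\in F$ with $\dist(z_k,F^{h_k})\ge\eps$ and $z_k\to z_*=(x_*,t_*)\in F$, so that $B_{\eps/2}(z_*)$ contains no discrete front point for large $k$. Then on each time slice meeting this ball, $u_{h_k}(\cdot,\ell h_k)$ has no boundary point of $\{u_{h_k}=1\}$ inside the connected cross-section, hence is identically $+1$ or identically $-1$ there. On the other hand $z_*\in\partial\{u>0\}\cap\partial\{u<0\}$ by no-fattening, so there are $a\in\{u>0\}$ and $b\in\{u<0\}$ within $\eps/4$ of $z_*$; Theorem~\ref{the:main} turns these into a slice whose cross-section is all $+1$ and a slice whose cross-section is all $-1$. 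Consequently two consecutive slices $\ell,\ell+1$ near $t_*$ carry opposite constant signs on a common spatial ball $B_{r_0}(x_*)$ with $r_0\sim\eps$. Away from the obstacles this forces $e^{-h_k\Delta}u_{h_k}(\cdot,\ell h_k)<0$ on $B_{r_0}(x_*)$ although $u_{h_k}(\cdot,\ell h_k)\equiv1$ there; but the heat kernel concentrates mass $1-O(e^{-c r_0^2/h_k})$ inside $B_{r_0}(x_*)$, so $e^{-h_k\Delta}u_{h_k}(x_*,\ell h_k)\ge1-o(1)>0$, a contradiction. In words, the thresholding scheme cannot flip a region of fixed radius $r_0$ in a single step of length $h_k\ll r_0^2$.

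The main obstacle, as the authors anticipate, is the compactness on the finite horizon: in the extinction setting of \cite{zbMATH00763426} the entire space-time track is automatically bounded in time, whereas here the interface may still be moving at $t=T$, so the barrier bound and the convergence must be propagated uniformly up to the moving endpoint. A secondary delicate point is the obstacle region in the second estimate: on $\Phi$ (resp.\ $\Psi$) the scheme pins $u_h\equiv1$ (resp.\ $\equiv-1$), so no sign flip occurs and the discrete front necessarily hugs $\partial\Phi\cup\partial\Psi$; one checks that this makes the second estimate hold trivially near the obstacle boundaries, so the heat-kernel argument is only needed on $\dom\setminus(\bar\Phi\cup\bar\Psi)$.
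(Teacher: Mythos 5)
Your proposal is correct and shares the paper's overall skeleton (split the Hausdorff distance into the two one-sided estimates, secure compactness via a barrier argument as in Lemma~\ref{lem:its_stay_bounded}, and argue by contradiction using Theorem~\ref{the:main}); the first estimate is essentially the paper's argument, merely phrased via convergent subsequences instead of uniform convergence on the compact sets $K_\eps$. Where you genuinely diverge is the second estimate. The paper stays entirely inside the viscosity framework: no fattening gives $(\sign^*(u))_*=\sign_*(u)$, so combining Propositions~\ref{the:max_min_sub_super} and~\ref{prop:u_hsubsuper} upgrades the one-sided bounds to the exact identities $\liminf_* u_h=\sign_*(u)$ and $\limsup^* u_h=\sign^*(u)$, and the constancy of $u_h$ on a space-time ball around a front point then contradicts $\sign_*(u)=-1$ there in one line. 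You instead use only the interior statement of Theorem~\ref{the:main} to produce one all-$+1$ and one all-$-1$ time slice near $z_*$, locate a consecutive flip, and rule it out by the quantitative fact that thresholding cannot flip a ball of radius $r_0\sim\eps$ in one step of size $h\ll r_0^2$. This is a valid and more self-contained route (it makes the mechanism visible and avoids re-invoking the envelope identities), at the price of two details you should write out: (i) the consecutive flip must occur at slices whose cross-sections still contain a common ball $B_{r_0}(x_*)$ --- this holds because the guaranteed $+1$ and $-1$ slices sit within $\eps/4+o(1)$ of $t_*$, so all intermediate cross-sections of $B_{\eps/2}(z_*)$ have radius bounded below; and (ii) the obstacle case is not quite ``trivial'': if $B_{r_0}(x_*)$ meets $\Phi$ (resp.\ $\Psi$) the contradiction comes from a single slice, since the pinned values $+1$ on $\Phi$ (resp.\ $-1$ on $\Psi$) are incompatible with an all-$-1$ (resp.\ all-$+1$) cross-section, and only when $B_{r_0}(x_*)$ avoids both obstacles is the heat-kernel estimate actually needed. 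With those two points made explicit, the argument is complete.
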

To prove Corollary \ref{cor:conv_front} we will use the following lemma about boundedness of the iterates of the scheme. 
\begin{lemma}\label{lem:its_stay_bounded}
  If $\Omega_0 \subset \bar{B}_R$ for any given radius $R > 0$ then $\Omega_{\ell h}^h := \{x \in \dom: u_h(x, \ell h) = 1\} \subset \bar{B}_R$ for any $\ell \in \N$.
\end{lemma}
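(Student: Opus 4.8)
The plan is to argue by induction on $\ell$, showing that a single step of the scheme~\eqref{def:scheme} cannot push the phase $\{u_h = 1\}$ outside $\bar B_R$. The base case $\ell = 0$ is precisely the hypothesis $\Omega_0 \subset \bar B_R$. For the inductive step I would assume $\Omega_{\ell h}^h = \{x \in \dom : u_h(x,\ell h) = 1\} \subset \bar B_R$, fix a point $x$ with $|x| > R$, and show $u_h(x,(\ell+1)h) = -1$. Since $\Phi \subset \Omega_0 \subset \bar B_R$, such an $x$ lies outside $\Phi$, so Step~3 of the scheme never forces the value $+1$ at $x$; it therefore suffices to establish the strict inequality $e^{-h\Delta}u_h(\cdot,\ell h)(x) < 0$, after which Step~2 returns $-1$ and Step~3 (which on $x \notin \Phi$ can only overwrite with $-1$ on $\Psi$) preserves it.

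Writing $G_h(z) = (4\pi h)^{-d/2} e^{-|z|^2/(4h)}$ for the heat kernel and using $e^{-h\Delta}1 = 1$ together with $u_h(\cdot,\ell h) \in \{-1,1\}$, I can rewrite the diffused value as
\begin{equation*}
  e^{-h\Delta}u_h(\cdot,\ell h)(x) = 2\int_{\Omega_{\ell h}^h} G_h(x-y)\,dy - 1 \le 2\int_{\bar B_R} G_h(x-y)\,dy - 1,
\end{equation*}
where the inequality uses $G_h \ge 0$ and the inductive hypothesis $\Omega_{\ell h}^h \subset \bar B_R$. Thus the whole step reduces to the purely geometric estimate $\int_{\bar B_R} G_h(x-y)\,dy < \tfrac12$ for every $|x| > R$.

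To prove this estimate I would exploit the convexity of the ball. Let $x_0 = R\,x/|x|$ be the nearest point of $\bar B_R$ to $x$ and $\nu = x/|x|$ the outward unit normal there. By convexity, $\bar B_R$ is contained in the supporting half-space $H = \{y : (y - x_0)\cdot \nu \le 0\}$, while $x$ lies on the opposite side at signed distance $(x - x_0)\cdot\nu = |x| - R > 0$. Monotonicity of the integral together with a one-dimensional computation of the Gaussian marginal in the direction $\nu$ then gives
\begin{equation*}
  \int_{\bar B_R} G_h(x-y)\,dy \le \int_{H} G_h(x-y)\,dy = \frac{1}{\sqrt{4\pi h}} \int_{|x|-R}^{\infty} e^{-t^2/(4h)}\,dt = \frac12\,\mathrm{erfc}\!\left(\frac{|x|-R}{2\sqrt{h}}\right) < \frac12,
\end{equation*}
the last inequality being strict because $|x| - R > 0$. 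Combining the two displays yields $e^{-h\Delta}u_h(\cdot,\ell h)(x) < 0$, hence $u_h(x,(\ell+1)h) = -1$, and therefore $\Omega_{(\ell+1)h}^h \subset \bar B_R$, which closes the induction.

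The main obstacle is this geometric estimate; everything else is bookkeeping with the three steps of the scheme. Its proof hinges on the single structural fact that the ball is convex, so that it sits inside the supporting half-space at the foot point $x_0$ and the heat-kernel mass it sends to an exterior point cannot reach the thresholding value $\tfrac12$. One could alternatively first reduce to $\Omega_0 = \bar B_R$ by the monotonicity of the scheme, comparing $u_h$ with the iterates started from $\mathbbold{1}_{\bar B_R} - \mathbbold{1}_{\bar B_R^c}$; but since the argument above only uses $\Omega_{\ell h}^h \subset \bar B_R$, this reduction is not needed. On the torus $\dom = \T^d$ the same computation applies verbatim once $R$ is small enough that $\bar B_R$ embeds in a fundamental domain, which is anyway the only regime in which the hypothesis $\Omega_0 \subset \bar B_R$ carries information.
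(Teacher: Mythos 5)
Your proof is correct, but it takes a genuinely different route from the paper's. The paper argues by comparison: it initializes the scheme from the half-space $H_e = \{x : e\cdot x < R\}$ (truncated by the outer obstacle), checks inductively that these barrier iterates never exceed the half-space configuration~\eqref{eq:mon_half_space}, and then invokes the order preservation of the scheme to conclude that the true iterates stay below every such barrier; intersecting over all $e \in \S^d$ recovers $\bar{B}_R$. You instead run a direct induction on $\ell$ and establish the pointwise inequality $e^{-h\Delta}u_h(\cdot,\ell h)(x) < 0$ for $|x|>R$ by dominating the heat-kernel mass of $\Omega^h_{\ell h}\subset \bar{B}_R$ by that of the supporting half-space at the foot point, which yields the explicit bound $\tfrac12\,\mathrm{erfc}\bigl((|x|-R)/(2\sqrt h)\bigr) < \tfrac12$. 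Both arguments ultimately rest on the same geometric fact --- the ball lies in each of its supporting half-spaces, and a half-space sends strictly less than half its Gaussian mass to any exterior point --- but yours inlines this computation and dispenses with the monotonicity of the scheme entirely, making the proof self-contained and quantitative, whereas the paper's version is shorter because monotonicity is already available as a structural property and the single barrier $-\mathbbold{1}_\Psi \wedge (\mathbbold{1}_{H_e}-\mathbbold{1}_{H_e^c})$ absorbs the obstacle term uniformly. Your handling of Step~3 (using $\Phi\subset\Omega_0\subset\bar{B}_R$ so that outside $\bar{B}_R$ the obstacle update can only push the value down) is the same bookkeeping the paper performs implicitly, and your closing remark about the torus applies equally to the paper's half-space construction, which is likewise phrased for $\dom=\R^d$.
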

\begin{proof}
  We first notice that $\Omega_0 \subset \bar{B}_R$ implies $\Phi \subset \bar{B}_R$. If one initializes the scheme with a half space $H_e := \{x \in \dom: e \cdot x < R\}$ for some $e \in \S^d$, i.e.,
  \begin{equation*}
    u_h^e(x,0) = - \mathbbold{1}_\Psi \wedge (\mathbbold{1}_{H_e} - \mathbbold{1}_{H_e^c}),
  \end{equation*} 
  then, using $e^{-h \Delta} (- \mathbbold{1}_\Psi \wedge (\mathbbold{1}_{H_e} - \mathbbold{1}_{H_e^c})) \leq e^{-h \Delta} (\mathbbold{1}_{H_e} - \mathbbold{1}_{H_e^c})$, one inductively checks that 
  \begin{equation}\label{eq:mon_half_space}
        u_h^e(\cdot,\ell h) \leq - \mathbbold{1}_\Psi \wedge (\mathbbold{1}_{H_e} - \mathbbold{1}_{H_e^c}) \quad \quad \text{for all } \ell \in \N.
  \end{equation}
  Furthermore, as the scheme is order preserving and $\Omega_0$ respects the obstacles it follows from
  \begin{equation*}
    \mathbbold{1}_{\Omega_0} - \mathbbold{1}_{\Omega_0^c} \leq - \mathbbold{1}_\Psi \wedge (\mathbbold{1}_{H_e} - \mathbbold{1}_{H_e^c}),
  \end{equation*}
  together with \eqref{eq:mon_half_space} that the inequality is preserved for all later times $\ell \in \N$, i.e.,
  \begin{equation*}
    u_h(\cdot, \ell h) \leq u_h^e(\cdot, \ell h) \leq  - \mathbbold{1}_\Psi \wedge (\mathbbold{1}_{H_e} - \mathbbold{1}_{H_e^c}).
  \end{equation*}
  As $e \in \S^d$ was arbitrary, one concludes the lemma.
\end{proof}

\begin{proof}[Proof of Corollary \ref{cor:conv_front}]
  By the definition of the Hausdorff distance we need to prove that 
  \begin{equation*}
    0 \xleftarrow{h \downarrow 0} \dist(F, F^h):= \max\{ \max_{(x,t) \in F^h} \dist((x,t), F), \max_{(x,t)\in F} \dist((x,t), F^h) \}.
  \end{equation*}
  We start by proving $\max_{(x,t) \in F^h} \dist((x,t), F)\rightarrow 0$. Let $R > 0$ be such that $\Omega_0 \subset \bar{B}_R$ (which exists as $\Omega_0$ is bounded by assumption). For fixed but arbitrary $\eps > 0$ define $K_{\eps} := \{(x,t) \in \bar{B}_R\setminus(\Psi \cup \Phi) \times [0, T]: \dist((x,t), F) \geq \eps\}$. The function $(x,t) \mapsto \sign(u(x,t))$ is continuous on $K_{\eps}$ since it is locally constant on $K_\eps$. Hence, on the compact set $K_\eps$ (because $\Phi$ and $\Psi$ are open by assumption) it holds by Proposition~\ref{prop:u_hsubsuper} and Proposition~\ref{the:max_min_sub_super} that
  \begin{equation*}
    {\liminf}_* u_h \leq {\limsup}^* u_h \leq \sign^*(u) = \sign_*(u) \leq {\liminf}_* u_h,
  \end{equation*}
  which implies
  \begin{equation}\label{eq:supinfsign}
     {\liminf}_* u_h = {\limsup}^* u_h = \sign(u).
  \end{equation}
  Assume that $u_h$ does not convergence uniformly to $\sign(u)$ on $K_\eps$. Then there exists a subsequence $(x_h, t_h)$ and $\delta > 0$ such that $|\sign(u(x_h,t_h)) - u_h(x_h,t_h)| > \delta$. As $K_\eps$ is compact it holds $(x_h, t_h) \rightarrow (x,t)$ for some $(x,t) \in K_\eps$ up to a further subsequence. But this already implies by~\eqref{eq:supinfsign} and the continuity of $u$ that $0 = |\sign(u(x,t)) - \sign(u(x,t))| \geq \delta$, a contradiction. Hence, $u_h$ has the uniform limit $\sign(u)$ on $K_\eps$. The uniform convergence implies $u_h = \sign(u)$ for $h$ small enough on $K_\eps$ as $u_h$ is $\pm 1$-valued. This means that $u_h$ is locally constant on $K_\eps$ for $h$ small enough which means that $F_h$ cannot intersect the interior of $K_\eps$. One concludes that $F_h \subset \{(x,t) \in \bar{B}_R \times [0, T]: \dist((x,t), F) \leq \eps\}$. As $\eps > 0$ was arbitrary, this shows the claimed $\max_{(x,t) \in F^h} \dist((x,t), F) \rightarrow 0$.

  Turning towards the second claim $ \max_{(x,t)\in F} \dist((x,t), F^h) \rightarrow 0$, one notices that the assumed $\cup_{t = 0}^T \Gamma_t \times \{t\} = \partial \{(x,t): u(x,t) > 0\} = \partial \{(x,t): u(x,t) < 0\}$ implies
  \begin{equation*}
    (\sign^*(u))_* = \sign_*(u) \quad \quad \text{ and } \quad \quad (\sign_*(u))^* = \sign^*(u) \quad \text{ in } \dom \times (0, T).
  \end{equation*}
  Furthermore, by Proposition~\ref{prop:u_hsubsuper} we know that 
  \begin{equation*}
    \bar{u}(x,t) = \limsup_{y \rightarrow x, \ell h \rightarrow t} u_h(y, \ell h) \quad \quad \text{ and } \quad \quad \ubar{u}(x,t) = \liminf_{y \rightarrow x, \ell h \rightarrow t} u_h(y , \ell h),
  \end{equation*}
  are viscosity sub- and supersolution, respectively, with initial data $\mathbbold{1}_{\Omega_0} - \mathbbold{1}_{\Omega_0^c}$ and obstacles $\psi^\Psi, \phi^\Phi$. Again, Proposition~\ref{the:max_min_sub_super} implies now
  \begin{equation*}
    \sign_*(u(x,t)) \leq \ubar{u}(x,t) \leq \bar{u}(x,t) \leq \sign^*(u(x,t)).
  \end{equation*}
  Application of the upper semi-continuous envelope or lower semi-continuous envelope yields
  \begin{equation*}
    \liminf_{y \rightarrow x,\ \ell h \rightarrow t} u_h(y, \ell h) = \sign_*(u(x,t)) \quad \quad \text{ and } \quad \quad \limsup_{y \rightarrow x,\ \ell h \rightarrow t} u_h(y, \ell h) = \sign^*(u(x,t)).
  \end{equation*}
  Assume there is a sequence $h \downarrow 0$ such that $\max_{(x,t)\in F} \dist((x,t), F^h)  \geq \eps >0$. Because $F$ is compact in $\dom \times [0, T]$, there exists a subsequence, again denoted by $h$, and points $(x_h, t_h) \in F$ converging to some $(x,t) \in F$ such that $\dist((x_h, t_h), F^h) \geq \eps$. Again up to a subsequence we therefore have either 
  \begin{equation*}
    u_h(y, \ell h) = 1 \quad \quad \text{ for any } h \text{ if } |y - x_h| + |\ell h - t_h| < \eps
  \end{equation*}
  or 
    \begin{equation*}
    u_h(y, \ell h) = -1 \quad \quad \text{ for any } h \text{ if } |y - x_h| + |\ell h - t_h| < \eps.
  \end{equation*}
  In the first case this implies 
  \begin{equation*}
    \sign_*(u(x,t)) = \liminf_{y \rightarrow x, \ell h\rightarrow t} u_h(y, \ell h) = 1. 
  \end{equation*}
  But this is a contradiction as $(x,t) \in F$ which means $\sign_*(u(x,t)) = -1$. Similarly, one concludes the other case. 
\end{proof}

\section{Space Discretization}\label{sec:convergence_space}
We discretize our domain $\dom$ into finitely many points $X_N \subset \dom$ with $\# X_N = N$. In this numerical setting $\Omega$ has to be finite with some sort of boundary conditions to be computable. So, choosing $\Omega = \mathbb{T}^d$ as the $d$-dimensional torus is a very natural choice. Denote by $\mu_N := \frac{1}{N} \sum_{x \in X_N} \delta_x$ the empirical measure of $X_N$. Furthermore assume there are functions $T_N: \dom \rightarrow X_N$ such that $T_N \rightarrow \id$ in $L^\infty(\dom)$ and $\mu_N = (T_N)_{\#}\mu$ for some measure $\mu = \rho \, \mathcal{L}^d\big|_{\dom}$ that has density $\rho > 0$ with respect to the Lebesgue meausure on $\dom$. Let $\Delta_N:\{f: X_N \rightarrow \R\} \rightarrow \R$ be a symmetric discretization of the Laplacian such that 
\begin{equation}\label{eq:weaktostrong}
  (e^{-h \Delta_N} \schvar_N) \circ T_N \rightarrow e^{-h \Delta}\schvar \quad \quad \text{ in } L^2(\dom) \text{ if } \schvar_N \circ T_N \rightharpoonup \schvar \text{ weakly in } L^2(\dom).
\end{equation}

\begin{rem}\label{rem:space_disc}
  The above framework covers the standard discretization scenarios, for example a uniform grid or a random geometric graph. 
  \begin{itemize}
    \item[i)] Under a random geometric graph we understand that $X_N$ is a collection of i.i.d.~samples of $\mu = \rho \mathcal{L}^d$ and 
    \begin{equation*}
      \Delta_N u (x) = \frac{1}{N} \sum_{y \in X_N} w_\eps(x,y) \left(\frac{u(x) - u(y)}{ \eps^2}\right)
    \end{equation*}
    is a graph Laplacian with weights $w_\eps(x,y) = \frac{1}{\eps^d}\eta(\frac{|x-y|}{\eps})$ for a suitable kernel $\eta: \R_+ \rightarrow \R_+$ and a length scale $\eps$. For further information see for example~\cite{zbMATH08091182}. One can then define optimal transport maps $T_N$ as in \cite{zbMATH08091182} to get the convergence~\eqref{eq:weaktostrong} with probability $1$ as done in \cite[Theorem 2]{zbMATH08091182} or in the best possible parameter regime in \cite[Theorem 4.3]{arXiv:2410.07776}.
    \item[ii)] Also deterministic discretizations fit in the framework. One needs to choose $X_N$ such that the empirical measure of $X_N$ converges to a measure $\mu$. This is for example the case if $X_N = \T^d \cap \eps \mathbb{Z}^d$ is a uniform grid. One could then take
    \begin{equation*}
      T_N: \T^d \rightarrow X_N, x \mapsto \eps z \text{ if } \|x - \eps z + Lk\|_{\ell^1} < \frac{\eps}{2} \text{ for some } k \in \Z^d \text{ and } z \in \mathbb{Z}
    \end{equation*}
    together with $\mu = \mathcal{L}^d$ and for any $u = (u_i)_{i \in X_N}$ the graph Laplacian induced by the kernel $\eta = \mathbbold{1}_{[0,1]}$ simplifies to a finite difference matrix 
    \begin{equation}\label{def:finite_dif_laplace}
      \big(\Delta_N u(x)\big)_i =- \sum_{j = 1}^d \left(\frac{u_{i + \eps e_j} - 2u_i + u_{i-\eps e_j}}{ \eps^2}\right).
    \end{equation}
    One can show with the same techniques as in case (i) that~\eqref{eq:weaktostrong} holds.
  \end{itemize}
\end{rem}

We define the discrete obstacle MBO scheme on $X_N$ by 
\begin{equation*}\label{def:discrete_scheme}
  \schvar_N^{\ell + 1} = \sign(e^{-h\Delta_N}\schvar_N^\ell + \mathbbold{1}_{\Phi_N} - \mathbbold{1}_{\Psi_N})
\end{equation*}
where $\Phi_N, \Psi_N \subset \dom$ are discrete obstacles. We will later also denote 
\begin{equation*}
  \phi_N(x) := \begin{cases}
    1 & \text{if } x \in \Phi_N\\
    -1 & \text{if } x \notin \Phi_N, 
  \end{cases}
  \quad \quad \text{ and }   \psi_N(x) := \begin{cases}
    1 & \text{if } x \in \Psi_N\\
    -1 & \text{if } x \notin \Psi_N. 
  \end{cases}
\end{equation*}

We want to prove the convergence of the discrete scheme~\eqref{def:discrete_scheme} to the continuous scheme~\eqref{def:scheme}. Our main tool to that aim will be the following minimizing movement interpretation that is a adaptation of~\cite{zbMATH06430104}.
\begin{lemma}\label{lem:minmov}
  The scheme~\eqref{def:discrete_scheme} is equivalent to 
  \begin{equation} \label{eq:minmov}
    \schvar_N^{\ell + 1} \in \argmin_{\schvar:X_N \rightarrow \{-1,1\}} E_N(\schvar) + \frac{1}{2h} d_N^2(\schvar, \schvar_N^\ell) \quad \quad \text{ s.t. } \schvar \equiv 1 \text{ on } \Phi_N \text{ and } \schvar \equiv - 1 \text { on } \Psi_N,
  \end{equation}
  or reformulated with a Lagrange multiplier
  \begin{equation}\label{eq:minmovlag}
    \schvar_N^{\ell + 1} \in \argmin_{\schvar:X_N \rightarrow \{-1,1\}} E_N(\schvar) + \frac{1}{2h} d_N^2(\schvar, \schvar_N^\ell)  + \frac{4}{\sqrt{h}N} \sum_{x \in X_N} \Big( (\phi_N(x)  - u(x))_+ + (\psi_N(x) -u(x) )_- \Big)
  \end{equation}
  where the energy and distance are given by 
  \begin{align*}
  E_N(\schvar) &= \frac{1}{\sqrt{h}N} \sum_{x \in X_N} (1- \schvar)(x) \left(e^{-h \Delta_N} (1 + \schvar)\right)(x),\\
  \frac{1}{2h}d_N^2(\schvar, \tilde{\schvar}) &= \frac{1}{\sqrt{h}N} \sum_{x \in X_N} (\schvar- \tilde{\schvar})(x)\left( e^{-h \Delta_N} (\schvar - \tilde{\schvar}) \right)(x).
  \end{align*}
\end{lemma}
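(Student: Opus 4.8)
The plan is to collapse both minimization problems onto the pointwise sign condition by algebraically simplifying the functional $E_N(\schvar) + \frac{1}{2h}d_N^2(\schvar, \schvar_N^\ell)$. Writing $K := e^{-h\Delta_N}$ and $\langle f, g\rangle := \sum_{x\in X_N} f(x) g(x)$, I would first record the two properties of $K$ that drive everything: $K$ is symmetric (by the assumption on $\Delta_N$) and $K\mathbbold{1} = \mathbbold{1}$ (constants lie in the kernel of the Laplacian). Expanding $E_N(\schvar) = \frac{1}{\sqrt{h}N}\langle \mathbbold{1} - \schvar, K(\mathbbold{1} + \schvar)\rangle$ and using these two facts, the cross terms $\langle\mathbbold{1}, K\schvar\rangle$ and $\langle \schvar, K\mathbbold{1}\rangle$ coincide and cancel, leaving $E_N(\schvar) = \frac{1}{\sqrt{h}N}\big(N - \langle \schvar, K\schvar\rangle\big)$. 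Expanding the distance term similarly produces a $+\langle \schvar, K\schvar\rangle$ contribution, and the crucial point is that these quadratic-in-$\schvar$ pieces annihilate, so that
\begin{equation*}
  E_N(\schvar) + \frac{1}{2h}d_N^2(\schvar, \schvar_N^\ell) = \frac{1}{\sqrt{h}N}\Big(N + \langle \schvar_N^\ell, K\schvar_N^\ell\rangle - 2\langle \schvar, K\schvar_N^\ell\rangle\Big),
\end{equation*}
whose only $\schvar$-dependent contribution is the linear term $-\frac{2}{\sqrt{h}N}\langle \schvar, K\schvar_N^\ell\rangle$.

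Second, I would solve the constrained problem~\eqref{eq:minmov}. Since the objective is now linear in $\schvar$ and the admissible set decouples across points, minimization is carried out pointwise: each free $x$ maximizes $\schvar(x)(K\schvar_N^\ell)(x)$, forcing $\schvar(x) = \sign\big((K\schvar_N^\ell)(x)\big)$, while $\schvar(x)$ is fixed to $\pm1$ on $\Phi_N,\Psi_N$. To identify this with the scheme, I would invoke $e^{-h\Delta_N}\schvar_N^\ell \in (-1,1)$ (the discrete analogue of the fact recorded in the Introduction): on $\Phi_N$ the argument $e^{-h\Delta_N}\schvar_N^\ell + 1$ is positive, on $\Psi_N$ the argument $e^{-h\Delta_N}\schvar_N^\ell - 1$ is negative, and on free points the indicator contributions vanish, so $\sign(e^{-h\Delta_N}\schvar_N^\ell + \mathbbold{1}_{\Phi_N} - \mathbbold{1}_{\Psi_N})$ reproduces exactly the constrained pointwise minimizer.

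Third, for the Lagrangian reformulation~\eqref{eq:minmovlag} I would drop the $\schvar$-independent constant and argue that the penalty competes against the same linear term, its coefficient $\frac{4}{\sqrt{h}N}$ being calibrated precisely so that the unconstrained penalized minimizer is automatically admissible. On a free point the penalty vanishes and one recovers $\schvar(x) = \sign\big((K\schvar_N^\ell)(x)\big)$. On a point of $\Phi_N$, flipping from the admissible value $\schvar(x)=1$ to $\schvar(x)=-1$ changes the linear term by $\frac{4}{\sqrt{h}N}(K\schvar_N^\ell)(x)$, whose magnitude is strictly below $\frac{4}{\sqrt{h}N}$, while incurring a penalty of $\frac{8}{\sqrt{h}N}$; hence the admissible value strictly wins, and the points of $\Psi_N$ are handled by the symmetric estimate. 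Thus the penalized minimizer coincides with the constrained one, closing the chain of equivalences.

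The computations are elementary once the cancellation is spotted, so the only step genuinely requiring care is the strict bound $e^{-h\Delta_N}\schvar_N^\ell \in (-1,1)$, which underlies both the identification of the sign formula with the constrained minimizer and the calibration of the penalty coefficient. I would justify it from the fact that $K = e^{-h\Delta_N}$ is a symmetric Markov operator with $K\mathbbold{1} = \mathbbold{1}$ and strictly positive kernel on the connected discretization, so that $(K\schvar_N^\ell)(x)$ is a strict convex combination of the $\pm1$ values of $\schvar_N^\ell$ and therefore lies strictly between $-1$ and $1$ whenever $\schvar_N^\ell$ is non-constant.
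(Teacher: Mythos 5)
Your proposal is correct and follows essentially the same route as the paper: expand the quadratic via the symmetric bilinear form so that the $\langle \schvar, e^{-h\Delta_N}\schvar\rangle$ terms cancel, minimize the remaining linear functional pointwise to recover the sign formula, and show the penalty coefficient $\frac{4}{\sqrt{h}N}$ dominates any gain from violating the constraint (the paper phrases this last step as a contradiction at a single point of $\Phi_N$, using $(e^{-h\Delta_N}\schvar_N^\ell)(x)+2>0$, which is your flipping-cost comparison). The only difference is that you explicitly justify the bound $e^{-h\Delta_N}\schvar_N^\ell\in(-1,1)$ via the Markov property of the kernel, a point the paper leaves implicit.
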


\begin{proof}
  We start with the equivalence of the scheme~\eqref{def:discrete_scheme} to \eqref{eq:minmov}. Towards that aim define the symmetric bilinear form 
  \begin{equation}\label{eq:undoing_square}
    b(f,g):=  \frac{1}{\sqrt{h}N} \sum_{x \in X_N} f(x) \left(e^{-h \Delta_N}g\right)(x).
  \end{equation}
  Then it holds 
  \begin{align*}
    E_h(\schvar) + \frac{1}{2h} d_N^2(\schvar, \schvar_n^\ell) &= b(1- \schvar, 1 + \schvar) +  b(\schvar - \schvar_N^\ell, \schvar-\schvar_N^\ell) \\
    &= b(1,1) - 2b(\schvar, \schvar_N^\ell) + b(\schvar_N^\ell, \schvar_N^\ell)
  \end{align*}
  such that the minimization problem~\eqref{eq:minmov} is equivalent to
  \begin{equation*}
    \schvar_N^{\ell + 1} \in \argmin_{\schvar:X_N \rightarrow \{-1,1\}} - \frac{2}{\sqrt{h}N} \sum_{x \in X_N} \schvar(x) \left(e^{-h \Delta_N} \schvar_N^\ell \right)(x) \quad \quad \text{ s.t. } \schvar \equiv 1 \text{ on } \Phi_N \text{ and } \schvar \equiv - 1 \text { on } \Psi_N.
  \end{equation*}
  The above problem can be minimized pointwise which yields that a minimizier satisfies
  \begin{equation*}
    \schvar(x) = \begin{cases}
      1 & \text{if } x\in \Phi_N \text{ or } (x \notin \Psi_N \text{ and } \left(e^{-h \Delta_N} \schvar_N^\ell \right)(x) > 0),\\
      -1 & \text{if } x\in \Psi_N \text{ or } (x \notin \Phi_N \text{ and } \left(e^{-h \Delta_N} \schvar_N^\ell \right)(x) < 0);
    \end{cases}
  \end{equation*}
  or equivalently the minimizer is given by~\eqref{def:discrete_scheme}.

   Next we prove that~\eqref{eq:minmov} is equivalent to~\eqref{eq:minmovlag}. For this it is enough to prove that a minimizer $\schvar_N^\ell$ of~\eqref{eq:minmovlag} satisfies $\phi_N \leq \schvar_N^\ell \leq \psi_N$; indeed this is enough because we then have that 
   \begin{equation*}
         0 = \frac{4}{\sqrt{h}N} \sum_{x \in X_N} (\phi_N(x)  - u(x))_+ + (\psi_N(x) -u(x) )_- 
   \end{equation*}
   holds and we can restrict the set of competitors in~\eqref{eq:minmovlag} to the admissable functions of \eqref{eq:minmov}.

    To prove the claim assume that $\schvar_N^{\ell+1}$ is given by~\eqref{eq:minmovlag}. We need to show that the $\phi_N \leq \schvar_N^{\ell+ 1} \leq \psi_N$ holds.
  We check $\phi_N \leq \schvar_N^{\ell + 1}$. The other inequality is treated similarly. So let $x \in \Phi_N$ be given. Expanding the square as in the lines following~\eqref{eq:undoing_square} yields that the minimization in~\eqref{eq:minmovlag} can be done pointwise and a minimizer has to satisfy
  \begin{align*}
    \schvar_N^{\ell+1} ({x}) \in \argmin_{z \in \{-1,1\}} - \frac{2}{\sqrt{h}N}  z \left(e^{-h \Delta_N} \schvar_N^\ell \right)({x}) + \frac{4}{\sqrt{h}N}(\phi_N(x) - z)_+.
  \end{align*}
  Assume that $\phi_N({x}) >\schvar_N^{\ell+1} ({x})$ which implies that $\schvar_N^{\ell +1}({x}) = -1$ and $\phi_N({x}) = 1$ but this means 
  \begin{align*}
    -1 \in \argmin_{z \in \{-1,1\}} - z \left(\left(e^{-h \Delta_N} \schvar_N^\ell \right) ({x}) + 2\right),
  \end{align*}
  which is a contradiction as $\left(e^{-h \Delta_N} \schvar_N^\ell \right) ({x}) + 2 > 0$.
\end{proof}

The analogous statement holds also on the continuous level.
\begin{lemma}\label{lem:minmov_cont}
  The scheme~\eqref{def:discrete_scheme} is equivalent to 
  \begin{equation} \label{eq:minmov_cont}
    \schvar^{\ell + 1} \in \argmin_{\schvar\colon\dom \rightarrow \{-1,1\}} E_h(\schvar) + \frac{1}{2h} d_h^2(\schvar, \schvar_N^\ell) \quad \quad \text{ s.t. } \schvar \equiv 1 \text{ on } \Phi \text{ and } \schvar \equiv - 1 \text { on } \Psi,
  \end{equation}
  or reformulated with a Lagrange multiplier
  \begin{equation}\label{eq:minmovlag_cont}
    \schvar^{\ell + 1} \in \argmin_{\schvar\colon\dom \rightarrow \{-1,1\}} E_h(\schvar) + \frac{1}{2h} d_h^2(\schvar, \schvar_N^\ell)  + \frac{4}{\sqrt{h}} \int_{\dom} (\phi(x)  - \schvar(x))_+ + (\psi(x) -\schvar(x) )_- \, d\mu
  \end{equation}
  where the energy and distance are given by~\eqref{def:energy}.
\end{lemma}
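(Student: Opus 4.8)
The plan is to follow the proof of Lemma~\ref{lem:minmov} essentially verbatim, replacing the discrete average $\tfrac1N\sum_{x\in X_N}$ by the integral $\int_\dom\cdot\,dx$ and the graph Laplacian $\Delta_N$ by the continuous Laplacian $\Delta$; the only properties of $e^{-h\Delta_N}$ used there are symmetry, positivity and $e^{-h\Delta_N}1=1$, and all three carry over to $e^{-h\Delta}$ on $L^2(\dom)$. Concretely, I would introduce the symmetric bilinear form
\begin{equation*}
  b(f,g) := \frac{1}{\sqrt h}\int_\dom f\,\big(e^{-h\Delta}g\big)\,dx ,
\end{equation*}
so that $E_h(\schvar)=b(1-\schvar,1+\schvar)$ and $\tfrac{1}{2h}d_h^2(\schvar,\schvar^\ell)=b(\schvar-\schvar^\ell,\schvar-\schvar^\ell)$. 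Expanding and using only the symmetry of $b$, exactly as in the lines following~\eqref{eq:undoing_square}, the $b(\schvar,\schvar)$ contributions cancel and
\begin{equation*}
  E_h(\schvar)+\tfrac{1}{2h}d_h^2(\schvar,\schvar^\ell)=b(1,1)-2\,b(\schvar,\schvar^\ell)+b(\schvar^\ell,\schvar^\ell),
\end{equation*}
where only the middle term depends on the competitor $\schvar$. Thus~\eqref{eq:minmov_cont} is equivalent to minimizing the linear functional $-2\,b(\schvar,\schvar^\ell)$ over measurable $\schvar\colon\dom\to\{-1,1\}$ subject to $\schvar\equiv1$ on $\Phi$ and $\schvar\equiv-1$ on $\Psi$.

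The one genuinely new point compared with the finite-dimensional setting is that this linear integral functional must be minimized \emph{pointwise $dx$-almost everywhere}: since the integrand $-\tfrac{2}{\sqrt h}\schvar(x)\,(e^{-h\Delta}\schvar^\ell)(x)$ depends on $\schvar$ only through its value at $x$, a minimizer is obtained by choosing, for a.e.\ $x$, the admissible $z\in\{-1,1\}$ minimizing $z\,(e^{-h\Delta}\schvar^\ell)(x)$. Off the obstacles this selects $\schvar(x)=\sign\big((e^{-h\Delta}\schvar^\ell)(x)\big)$ and on $\Phi,\Psi$ the prescribed values, which is exactly~\eqref{def:scheme}; the resulting $\schvar$ is measurable, being the sign of a measurable function, so minimizing the integral genuinely reduces to the pointwise problem.

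For the equivalence of~\eqref{eq:minmov_cont} with the penalized problem~\eqref{eq:minmovlag_cont} I would show, as in the discrete case, that any minimizer of~\eqref{eq:minmovlag_cont} already satisfies $\phi\le\schvar\le\psi$, so the penalty vanishes and one recovers the admissible competitors of~\eqref{eq:minmov_cont}. Applying the same pointwise reduction to~\eqref{eq:minmovlag_cont}, for $x\in\Phi$ one has $\phi(x)=1$ and the candidate values $z=\pm1$ give objectives differing by a multiple of $-\big((e^{-h\Delta}\schvar^\ell)(x)+2\big)$; since $e^{-h\Delta}\schvar^\ell\ge-1$ this is $\le-1<0$, forcing $\schvar=1$ on $\Phi$, and symmetrically $\schvar=-1$ on $\Psi$.

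I expect the main obstacle to be the rigorous justification of this measurable-selection step—that minimizing the integral over measurable $\pm1$-valued functions really coincides with pointwise minimization of the integrand and produces a measurable minimizer—together with the elementary but essential bound $e^{-h\Delta}\schvar^\ell\in[-1,1]$, which prevents the penalty from ever making a constraint violation profitable. All remaining manipulations are a direct transcription of the discrete argument.
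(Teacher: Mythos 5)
Your proposal is correct and is exactly the argument the paper intends: the paper omits the proof of Lemma~\ref{lem:minmov_cont} entirely, asserting only that it is the analogue of Lemma~\ref{lem:minmov}, and your transcription (with the extra care about a.e.\ pointwise minimization, measurability of the selected minimizer, and the bound $e^{-h\Delta}\schvar^\ell\ge -1$) supplies precisely that analogue. Note only that the expansion of $b(1-\schvar,1+\schvar)$ into $b(1,1)-2b(\schvar,\schvar^\ell)+b(\schvar^\ell,\schvar^\ell)$ is unproblematic here because this lemma sits in Section~\ref{sec:convergence_space} where $\dom=\mathbb{T}^d$, so $b(1,1)$ is finite.
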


The next proposition is the main statement of this section and states that the space-discretized obstacle MBO scheme converges to the space-continuous obstacle MBO scheme.
\begin{proposition}\label{prop:mean_field_limit}
  Let $\schvar_N^0, \phi_N, \psi_N: X_N \rightarrow \{-1,1\}$ together with $\schvar^0, \phi^\Phi, \psi^\Psi: \dom \rightarrow \{-1,1\}$ be given such that $\schvar_N^0 \circ T_N \rightharpoonup \schvar^0, \phi_N \circ T_N \rightharpoonup \phi, \psi_N \circ T_N \rightharpoonup \psi$ weakly in $L^2(\dom)$. Let $\schvar_N^\ell:X_N \rightarrow \{-1,1\}$ be the iterates of~\eqref{def:discrete_scheme} with initial condition $\schvar_N^0$ and obstacles $\phi_N, \psi_N$ and $\schvar^\ell$ the iterates of~\eqref{def:scheme} with initial condition $\schvar^0$ and obstacles $\Phi, \Psi$. Then it holds up to subsequences
  \begin{equation*}
    \schvar_N^\ell \circ T_N \rightharpoonup \schvar^\ell \quad \quad\text{ weakly in } L^2(\dom) \text{ for any } \ell \in \N.
  \end{equation*}
\end{proposition}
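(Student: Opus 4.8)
The plan is to argue by induction on $\ell$, propagating the weak $L^2$-convergence through one step of the scheme~\eqref{def:discrete_scheme}. The base case $\ell = 0$ is exactly the hypothesis $\schvar_N^0 \circ T_N \rightharpoonup \schvar^0$. For the inductive step I assume $\schvar_N^\ell \circ T_N \rightharpoonup \schvar^\ell$ weakly in $L^2(\dom)$, where $\schvar^\ell$ is the continuous iterate of~\eqref{def:scheme}, and I show that $\schvar_N^{\ell+1}\circ T_N$ converges to $\schvar^{\ell+1}$. In fact the argument yields strong convergence, which of course implies the claimed weak convergence, so the inductive hypothesis is reproduced at the next level.

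First I would pass everything inside the $\sign$ to the limit. By the defining property~\eqref{eq:weaktostrong} of the discrete Laplacian, the inductive hypothesis gives the strong convergence $(e^{-h\Delta_N}\schvar_N^\ell)\circ T_N \to e^{-h\Delta}\schvar^\ell$ in $L^2(\dom)$. For the obstacle terms I would use the elementary but crucial observation that weak convergence of indicator functions to an indicator function upgrades to strong convergence: since $\mathbbold{1}_{\Phi_N}\circ T_N \rightharpoonup \mathbbold{1}_\Phi$ (equivalent to $\phi_N \circ T_N \rightharpoonup \phi$) with both sides $\{0,1\}$-valued, testing against the constant $1$ gives $\|\mathbbold{1}_{\Phi_N}\circ T_N\|_{L^2(\mu)}^2 = \mu_N(\Phi_N) \to \mu(\Phi) = \|\mathbbold{1}_\Phi\|_{L^2(\mu)}^2$, and norm convergence together with weak convergence forces strong convergence in the Hilbert space $L^2$; likewise for $\Psi_N$. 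Consequently the full argument of the sign converges strongly,
\[
  w_N := \big(e^{-h\Delta_N}\schvar_N^\ell + \mathbbold{1}_{\Phi_N} - \mathbbold{1}_{\Psi_N}\big)\circ T_N \longrightarrow w := e^{-h\Delta}\schvar^\ell + \mathbbold{1}_\Phi - \mathbbold{1}_\Psi \quad \text{in } L^2(\dom).
\]

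It then remains to commute the limit with $\sign$, i.e.\ to show $\sign(w_N) \to \sign(w) = \schvar^{\ell+1}$. Since $\sign$ is continuous away from the origin, this follows provided $w \neq 0$ almost everywhere, which I would check region by region. On $\Phi$ one has $w = e^{-h\Delta}\schvar^\ell + 1 > 0$, because the strictly positive heat kernel forces $e^{-h\Delta}\schvar^\ell > -1$ (here $\schvar^\ell \equiv 1$ on $\Phi$, so $\schvar^\ell \not\equiv -1$ whenever $\Phi$ has positive measure); symmetrically $w < 0$ on $\Psi$. Off the obstacles $w = e^{-h\Delta}\schvar^\ell$, and since the heat semigroup is smoothing, this is a real-analytic function on $\dom$; as $e^{-h\Delta}$ is injective and $\schvar^\ell$ is $\{-1,1\}$-valued (hence not the zero function), it is not identically zero, so its zero set is Lebesgue-null. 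Hence $w \neq 0$ a.e.; passing to a subsequence we may assume $w_N \to w$ pointwise a.e., so $\sign(w_N) \to \sign(w)$ a.e., and the uniform bound $|\sign(w_N)| \le 1$ together with dominated convergence gives $\sign(w_N) \to \sign(w)$ in $L^2(\dom)$, completing the induction. Since the limit is uniquely determined at each step, one in fact obtains convergence of the whole sequence rather than merely of a subsequence.

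The main obstacle is this last step, transferring the limit through the discontinuous nonlinearity $\sign$, which hinges entirely on the almost-everywhere non-vanishing of $w$. The genuinely delicate part is the off-obstacle region, where one must rule out that the smoothed configuration $e^{-h\Delta}\schvar^\ell$ vanishes on a set of positive measure; this is precisely where the analyticity of the heat semigroup enters. By contrast, on the obstacle sets the sign is pinned down by the strict bound $|e^{-h\Delta}\schvar^\ell| < 1$, and the weak-to-strong upgrade for the obstacle indicators is the small trick that lets the otherwise only weakly converging constraint terms be handled on the same footing as the strongly converging diffusion term. Alternatively, one could route the argument through the minimizing-movement formulations of Lemmas~\ref{lem:minmov} and~\ref{lem:minmov_cont}, passing to the limit in the energies and the penalized constraints, but the direct sign-convergence argument above seems the most economical.
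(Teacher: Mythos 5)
Your proof is correct, but it takes a genuinely different route from the paper's. The paper argues via the minimizing-movement formulations of Lemmas~\ref{lem:minmov} and~\ref{lem:minmov_cont}: it shows that the penalized functional $G_N(\schvar) = E_N(\schvar) + \frac{1}{2h}d_N^2(\schvar,\schvar_N^\ell) + \frac{4}{\sqrt{h}N}\sum_x\big((\phi_N-\schvar)_+ + (\psi_N-\schvar)_-\big)$ $\Gamma$-converges in the weak $TL^2$ topology to its continuum counterpart (lower bound from convexity plus strong $L^2$-continuity of the penalty, recovery sequence by Lipschitz approximation restricted to $X_N$), and then invokes convergence of minimizers. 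You instead push the convergence directly through one step of the update rule: property~\eqref{eq:weaktostrong} handles the diffusion term, the weak-to-strong upgrade for indicators (which is the paper's $|\Phi_N\triangle\Phi|\to 0$ computation in disguise) handles the obstacle terms, and the limit is commuted with $\sign$ by showing that $w = e^{-h\Delta}\schvar^\ell + \mathbbold{1}_\Phi - \mathbbold{1}_\Psi$ vanishes only on a null set. That non-degeneracy of the zero level set is the one extra ingredient your route requires, and you obtain it correctly: strict positivity of the heat kernel pins the sign on the obstacles, and real-analyticity of $e^{-h\Delta}\schvar^\ell$ kills the zero set elsewhere (on $\T^d$, where this section of the paper lives, injectivity of $e^{-h\Delta}$ is immediate from Fourier; on $\R^d$ it needs backward uniqueness for bounded caloric functions, which deserves a word). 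In exchange you get strictly more than the statement asks: strong $L^2$ convergence of the full sequence, whereas the $\Gamma$-convergence argument cannot distinguish between minimizers of a possibly degenerate limit problem and is therefore stated only up to subsequences. The trade-off is that the paper's variational route generalizes to settings (volume constraints, multiple phases) where no pointwise sign characterization of the minimizer is available, which is exactly the direction the Outlook section points to.
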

\begin{rem}
  The condition $\phi_N \circ T_N \rightharpoonup \phi^\Phi$ is for example satisfied if one uses $\phi_N = \phi^\Phi\big|_{X_N}$. To see this one takes a test function $f \in L^2(\dom)$ and computes
  \begin{align*}
    \int_{\dom} (\phi_N \circ T_N - \phi^\Phi)f\, d\mu &= 2 \int_{\dom} \left(\mathbbold{1}_{\{\phi_N \circ T_N = 1\} \setminus \Phi} - \mathbbold{1}_{\{\phi_N \circ T_N = -1\} \setminus \Phi^c}\right) f \,d \mu\\
    &\leq 2 \left(2\int_{\dom}  \mathbbold{1}_{\{\phi_N \circ T_N = 1\} \setminus \Phi} + \mathbbold{1}_{\{\phi_N \circ T_N = -1\} \setminus \Phi^c} \right)^{\frac{1}{2}} \|f\|_{L^2}\\
    &\leq 2 ( 4\mathcal{H}^{d-1}(\partial\Phi) \ \omega(\|T_N - \id\|_{\infty}))^{\frac{1}{2}} \|f\|_{L^2} \\
    &\xrightarrow{N \rightarrow \infty} 0
  \end{align*}
  where $\omega(s)$ is the volume of a $d$-dimensional ball with radius $s$.
\end{rem}
Now we turn to the proof of Proposition \ref{prop:mean_field_limit}, which relies on $\Gamma$-convergence. Curiously, it turns out to be convenied to consider the functional $E_N(u) + \frac{1}{2h}d_N^2 (u, u_N^\ell)$ as a perturbation.
\begin{proof}[Proof of Proposition \ref{prop:mean_field_limit}]
  We prove the statement by showing that for every $\ell \in \N$
  \begin{equation*}
    G_N(\schvar) := E_N(\schvar) + \frac{1}{2h} d_N^2(\schvar, \schvar_N^\ell)  + \frac{4}{\sqrt{h}N} \sum_{x \in X_N} (\phi_N(x)  - \schvar_N(x))_+ + (\psi_N(x) -\schvar_N(x) )_- \, d\mu(x).
  \end{equation*}
  $\Gamma$-convergences in the weak $TL^2$ topology to the its continuous mean field limit
  \begin{equation*}
    G(\schvar) := E_h(\schvar) + \frac{1}{2h} d_h^2(\schvar, \schvar^\ell)  + \frac{4}{\sqrt{h}} \int_{\dom} (\phi(x)  - \schvar(x))_+ + (\psi(x) -\schvar(x) )_-. 
  \end{equation*}
  With weak $TL^2$ topology it is meant that $f_N:X_N \rightarrow \R$ converges to $f:\dom \rightarrow \R$ if $f_N \circ T_N \rightharpoonup f$ weakly in $L^2(\dom)$. $\Gamma$-compactness is trivial in this case, because $\schvar_N^\ell \circ T_N$ has values in $\{-1,1\}$. We prove the $\Gamma$-convergence for $\ell = 0$; for general $\ell \in \N$ the statement follows by induction.

  We first note that because of \eqref{eq:weaktostrong} it holds for any $\schvar_N: X_N \rightarrow \R$ with $\schvar_N \circ T_N \rightharpoonup \schvar$ that
  \begin{align*}
    E_N(\schvar_N) &= \frac{1}{\sqrt{h}N} \sum_{x \in X_N} (1- \schvar_N)(x) \left(e^{-h \Delta_N} (1 + \schvar_N)\right)(x)\\
    &= \frac{1}{\sqrt{h}} \int_{\dom} (1- \schvar_N)(T_N(x)) \left(e^{-h \Delta_N} (1 + \schvar_N)\right)(T_N(x)) \, d\mu(x)\\
    &\xrightarrow{N \rightarrow \infty} \frac{1}{\sqrt{h}} \int_{\dom} (1- \schvar)(x) \left(e^{-h \Delta} (1 + \schvar)\right)(x) \, d\mu(x)\\
    &= E_h(\schvar).
  \end{align*}
  Similarly it holds
  \begin{equation}
    \frac{1}{2h}d_N^2(\schvar_N, \schvar_N^0) \xrightarrow{N \rightarrow \infty} \frac{1}{2h}d_h^2(\schvar, \schvar^0).
  \end{equation}
  Hence it is enough to prove that 
  \begin{equation}
    \frac{4}{\sqrt{h}N} \sum_{x \in X_N} \Big( (\phi_N(x)  - \schvar_N(x))_+ + (\psi_N(x) -\schvar_N(x) )_- \Big)
  \end{equation}
  $\Gamma$-convergences to its mean field limit. We concentrate on the first summand; the other one is treated similarly. Starting with the lower bound let a sequence of functions $\schvar_N: X_N \rightarrow \R$ and $\schvar: \dom \rightarrow \R$ be given such that $\schvar_N \circ T_N \rightharpoonup \schvar$ weakly in $L^2(\dom)$. As $\phi_N \circ T_N \rightarrow \phi^\Phi$ weakly in $L^2(\dom)$ it is enough to prove that
  \begin{align*}
    \tilde{G}(f):=\frac{4}{\sqrt{h}} \int_{\dom} f_+(x) \, d\mu(x)
  \end{align*}
  is lower-semicontinuous with respect to the weak $L^2$-topology. But this is just a consequence of the continuity of $\tilde{G}$ in the strong $L^2$-topology together with the convexity of $\tilde{G}$.

  Turning towards the recovery sequence take any sequence of Lipschitz-approximations $\{\tilde{\schvar}_\delta\}_\delta$ such that $\tilde{\schvar}_\delta \xrightarrow{\delta \downarrow 0} \schvar$ strongly in $L^2(\dom \setminus (\Phi \cup \Psi))$ and $\tilde{\schvar}_\delta = 1$ on $\Phi$ and $\tilde{\schvar}_\delta = -1$ on $\Psi$. Because $G$ is continuous in $L^2(\dom)$ it is thus enough to construct a recovery sequence for $\tilde{\schvar}_\delta$ with $\delta$ fixed. The recovery sequence for $\schvar$ follows by taking a diagonal sequence. We therefore define $\schvar_N$ as the restriction of $\tilde{\schvar}_\delta$ to the set $X_N$, i.e.,
  \begin{equation*}
    \schvar_N(x) := \tilde{\schvar}_\delta \big|_{X_N} (x).
  \end{equation*}
  By the Lipschitz-continuity of $\tilde{u}_\delta$ it follows
  \begin{equation*}
    \int_{\dom} |\schvar_N \circ T_N - \tilde{\schvar}_\delta| \, d\mu 
    \leq     \int_{\dom} |\schvar_N(T_N(x)) - \tilde{\schvar}_\delta(T_N(x))| + |\tilde{\schvar}_\delta(T_N(x)) - \tilde{\schvar}_\delta(x)| \, d\mu \leq |\dom| L \|T_N - \id\|_{\infty}.
  \end{equation*}
  Hence, using $T_N \rightarrow \id$ in $L^\infty(\dom)$ one concludes that $\schvar_N \circ T_N$ converges to $\tilde{\schvar}_\delta$ as $N$ to infinity. In the following we denote $\Phi_N := \{\phi_N \circ T_N  = 1\}$. Next we want to show that the symmetric difference $\Phi_N \triangle \Phi$ converges to zero. To that aim define the auxiliary functions
  \begin{equation*}
    f_N := \frac{\phi_N \circ T_N + 1}{2} \in \{0,1\}, \quad \quad \text{ and } \quad \quad f := \frac{\phi^\Phi+ 1}{2} \in \{0,1\}.
  \end{equation*}
  As $\phi_N \circ T_N \rightharpoonup \phi^\Phi$ it holds $f_N \rightharpoonup f$ weak in $L^2(\dom)$ as $N \rightarrow \infty$. Therefore, it follows
  \begin{align*}
    |\Phi_N \triangle \Phi| = \int_{\dom} f(1-f_N) + f_N(1-f) \, d\mu \rightarrow \int_{\dom}f(1-f) + f(1-f) \, d\mu = 0
  \end{align*}
  where we used that $f_N, f \in \{0,1\}$ and $f_N = \mathbbold{1}_{\Phi_N}, f = \mathbbold{1}_{\Phi}$.
  As $\schvar_N \in [-1,1]$ one has 
  \begin{align*}
    \frac{4}{\sqrt{h}N} \sum_{x \in X_N} (\phi_N(x)  - \schvar_N(x))_+ &= \frac{4}{\sqrt{h}} \int_{\dom} (\phi_N(T_N(x))  - \schvar_N(T_N(x)))_+ \, d\mu(x)\\
   &= \frac{4}{\sqrt{h}} \int_{ \Phi_N} (1  - \schvar_N \circ T_N)_+ \, d\mu\\
     &= \frac{4}{\sqrt{h}} \int_{\Phi} (1  - \schvar_N \circ T_N)_+ \, d\mu +   \frac{4}{\sqrt{h}} \int_{\Phi_N\triangle \Phi} (1  - \schvar_N \circ T_N)_+ \, d\mu.
  \end{align*}
  This yields the convergence of the energy as 
  \begin{equation*}
    \frac{4}{\sqrt{h}} \int_{\Phi} (1  - \schvar_N \circ T_N)_+ \, d\mu  = \frac{4}{\sqrt{h}} \int_{\Phi} 1  - \schvar_N \circ T_N\, d\mu \xrightarrow{N \rightarrow \infty} \frac{4}{\sqrt{h}} \int_{\Phi} 1  - \tilde{\schvar}_\delta \, d\mu = \frac{4}{\sqrt{h}} \int_{\Omega} (\phi^\Phi  - \tilde{\schvar}_\delta)_+ \, d\mu 
  \end{equation*}
  and 
  \begin{equation*}
     \left|\frac{4}{\sqrt{h}} \int_{\Phi_N \triangle \Phi} (1  - \schvar_N \circ T_N)_+ \, d\mu\right| \leq \frac{8}{\sqrt{h}}|\Phi_N\triangle \Phi| \xrightarrow{N \rightarrow 0} 0. \qedhere
  \end{equation*}

\end{proof}

\section{Numerical Simulation of Invasion Processes}\label{sec:numerics}
\begin{figure}
  \includegraphics[width=0.19\linewidth]{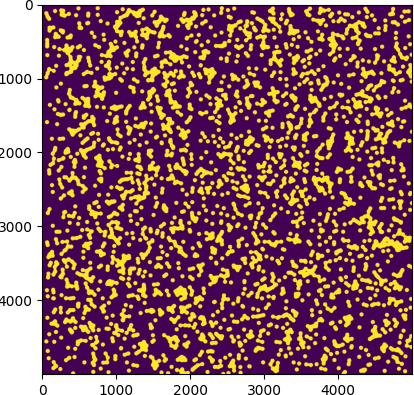}
  \includegraphics[width=0.19\linewidth]{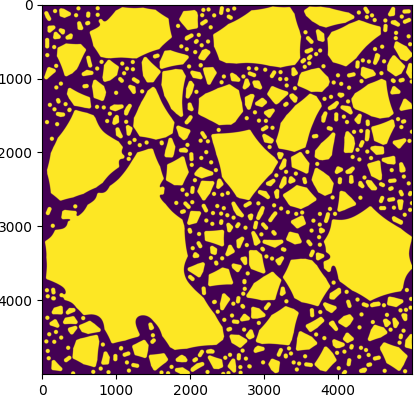}
  \includegraphics[width=0.19\linewidth]{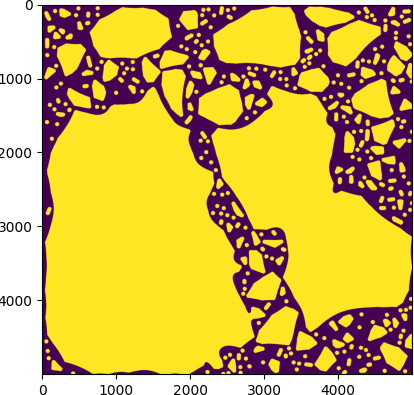}
  \includegraphics[width=0.19\linewidth]{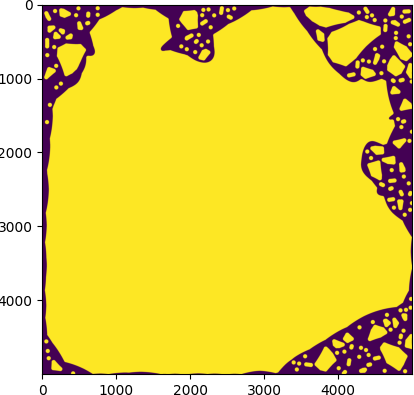}
  \includegraphics[width=0.19\linewidth]{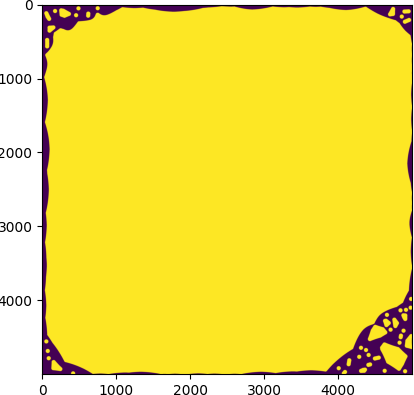}
  \caption{Simulation of mean curvature flow with obstacles computed with the scheme \eqref{def:discrete_scheme}. From left to right: initial condition, after 2000, 4000, 6000 and 8000 iterations.}
  \label{fig:snapshots}
\end{figure}
  We briefly describe our implementation of the dynamics of the invasion processes of~\cite{martin2023domain} in two dimensions. 
  As introduced in the previous section we work on the torus $\mathbb{T}^2 = [0,1)^2$ and discretize with a uniform grid as explained in Remark~\ref{rem:space_disc}ii). The two-dimensional Fourier transform of the heat kernel assosciated to~\eqref{def:finite_dif_laplace} can be explicitely computed to be
  \begin{equation*}
    \hat{G}_{ij} =\frac{1}{N} e^{2Nh(\cos(\frac{2\pi i}{\sqrt{N}}) + \cos(\frac{2\pi j}{\sqrt{N}}) - 2)}.
  \end{equation*}
  Here, $N = (1/\eps)^2$ is the number of pixels introduced by a uniform grid with distance $\eps$ between the grid points. The scaling $N$ is such that the width of the heat kernel is only dependent on $h$ but not on $\eps = 1/N$.
  Hence, solving the heat equation can be done efficiently by pointwise multiplication in Fourier space. Thus, updating $u^\ell \in \R^{N \times N}$ by the scheme \eqref{def:discrete_scheme} is simply given by
  \begin{equation}\label{eq:implemented_scheme}
  u^{\ell +1 } = \chi_{\Phi} \wedge - \chi_{\Psi} \vee \texttt{IFFT}(\hat{G} \odot \texttt{FFT}(u^\ell)),
\end{equation}
where \texttt{FFT} is the two-dimensional Fast Fourier Transform, \texttt{IFFT} is the two-dimensional Inverse Fast Fourier Transform and $\odot$ denotes pointwise multiplication. The overall running time is in $\mathcal{O}(N \log N)$ where the computational bottle neck is the Fast Fourier Transform. Fig.~\ref{fig:running_time} shows the average running time per iteration dependent on the number of pixels $N$. To measure the running time we used our Python implementation on a single AMD Ryzen 5 PRO 8540U. The code for all our tests can be found at~\cite{obstacleMBO}. The plot shows agreement with the theoretical (up to logarithmic correction) linear relation between $N$ and the complexity. 
  \begin{figure}
    \includegraphics[height=5cm]{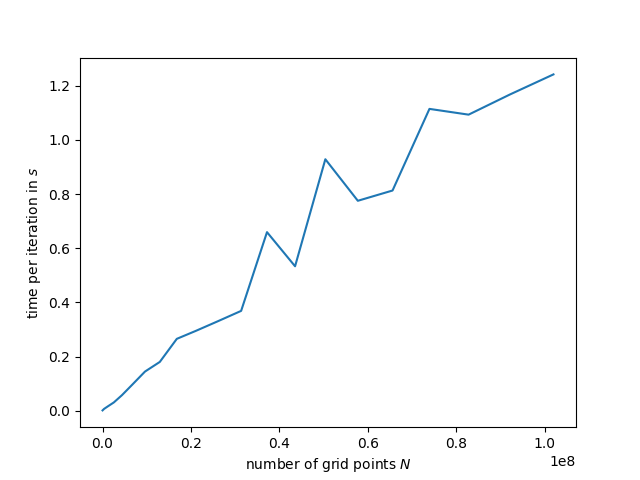}
    \caption{Comparison of average running time per iteration of our scheme described in Section~\ref{sec:numerics}. The average was taken over $100$ iterations.}
    \label{fig:running_time}
  \end{figure}

In view of approximating the dynamics of the invasion processes described by~\cite{martin2023domain} two forms of approximation errors are visible in our experiments; compare with Fig.~\ref{fig:steady_states}. First, two components can get connected when they are close to each other although they might not touch yet. This happens due to the non-locality of the kernel. The length scale of this phenomenon is the width of the heat kernel $\sqrt{h}$. That means that the components get connected when they are roughly $\sqrt{h}$ apart. For the invasion process of~\cite{martin2023domain} this means that the front propagates faster than it should. On the other hand, a second effect slows down propagation; the dynamics can get pinned although the convex hull is not yet reached. For a steady state of obstacle mean curvature flow the interface should be straight away from the obstacle. But in our discrete dynamics with discrete normal velocity $V$ it has to hold $Vh \gtrsim \eps$ such that the interface can jump to the next grid point and not get pinned. As $V \approx -H$ this means that the discrete curvature $-H$ can be of the order $\frac{\eps}{h}$ for a discrete steady state. This observation is in line with~\cite{zbMATH06640199} where they prove convergence of the classical MBO scheme on grids when $\eps \ll h$.
  
  We want to simulate the dynamics of the invasion process of~\cite{zbMATH00089226}. This is done by sampling disks which serve as obstacles but also initial condition for the dynamics generated by~\eqref{eq:implemented_scheme}. To be more precise, we use a uniform grid, as in Remark~\ref{rem:space_disc}ii), on the two-dimensional torus $\mathbb{T}^2$ with $\eps=\frac{1}{5000}$ wich leads to $N = 5000^2$. We choose the disk radius $r_d = 0.005642$ such that the dimensionless system size $\frac{|\T^2|}{\pi r_d^2} =:\bar{A}_{syst} = 10000$. Furthermore, we choose the number of disks for the initialization to be $N_d = 2999$ such that the initial concentration is $N_d \pi r_d^2=:C = 0.3$. The centers of the disks are uniformly randomly distributed over the domain. The characteristic function of the disks $\chi_d(x) := \begin{cases}
  1 & \text{if } x \text{ is in a disk},\\
  -1 & \text{otherwise},
  \end{cases}$ is used as initial condition $\chi^0$ and as inner obstacle $\chi_\Phi$. Note, we also add a zero padding to avoid changes in the dynamics due to the periodic boundary conditions. The size of the padding depends on the diffusion time. The right choice of diffusion time depends on $N$ and the desired approximation error as previously discussed. In our tests $h = r_{\text{disk}}^2/16$ seemed to be a fair trade between approximation accuracy and number of iterations needed. Note, the choice for $h$ is such that the width of the kernel is a quarter of the width of the obstacles. Some snapshots of the simulation can be seen in Fig.~\ref{fig:snapshots}.

  \begin{figure}[h]
\centering

\begin{subfigure}{0.238\linewidth}
  \centering
  \begin{adjustbox}{width=\linewidth}
    
\begin{tikzpicture}[x=0.75pt,y=0.75pt,yscale=-1,xscale=1]

\draw  [fill={rgb, 255:red, 68; green, 1; blue, 84 }  ,fill opacity=1 ] (30.33,4.8) -- (295.51,4.8) -- (295.51,272.28) -- (30.33,272.28) -- cycle ;
\draw  [color={rgb, 255:red, 253; green, 231; blue, 36 }  ,draw opacity=1 ][fill={rgb, 255:red, 253; green, 231; blue, 36 }  ,fill opacity=1 ] (71.43,93.44) .. controls (71.43,68.95) and (91.26,49.09) .. (115.71,49.09) .. controls (140.17,49.09) and (160,68.95) .. (160,93.44) .. controls (160,117.93) and (140.17,137.78) .. (115.71,137.78) .. controls (91.26,137.78) and (71.43,117.93) .. (71.43,93.44) -- cycle ;
\draw [color={rgb, 255:red, 255; green, 255; blue, 255 }  ,draw opacity=1 ] [dash pattern={on 7.5pt off 2.25pt}]  (160.33,2.67) -- (160.33,273.67) ;
\draw [color={rgb, 255:red, 255; green, 255; blue, 255 }  ,draw opacity=1 ]   (160.33,242.67) -- (181.33,243) ;
\draw    (30,272.02) -- (24,272.02) ;
\draw    (30,4.98) -- (24,4.98) ;
\draw    (30,279.53) -- (30,272) ;
\draw    (295,280) -- (295,272.47) ;
\draw  [color={rgb, 255:red, 253; green, 231; blue, 36 }  ,draw opacity=1 ][fill={rgb, 255:red, 253; green, 231; blue, 36 }  ,fill opacity=1 ] (71.43,182.12) .. controls (71.43,157.63) and (91.26,137.78) .. (115.71,137.78) .. controls (140.17,137.78) and (160,157.63) .. (160,182.12) .. controls (160,206.61) and (140.17,226.47) .. (115.71,226.47) .. controls (91.26,226.47) and (71.43,206.61) .. (71.43,182.12) -- cycle ;
\draw  [color={rgb, 255:red, 253; green, 231; blue, 36 }  ,draw opacity=1 ][fill={rgb, 255:red, 253; green, 231; blue, 36 }  ,fill opacity=1 ] (181.33,138.5) .. controls (181.33,114.01) and (201.16,94.16) .. (225.62,94.16) .. controls (250.08,94.16) and (269.9,114.01) .. (269.9,138.5) .. controls (269.9,162.99) and (250.08,182.84) .. (225.62,182.84) .. controls (201.16,182.84) and (181.33,162.99) .. (181.33,138.5) -- cycle ;
\draw  [fill={rgb, 255:red, 0; green, 0; blue, 0 }  ,fill opacity=1 ] (222.29,138.5) .. controls (222.29,136.66) and (223.78,135.17) .. (225.62,135.17) .. controls (227.46,135.17) and (228.95,136.66) .. (228.95,138.5) .. controls (228.95,140.34) and (227.46,141.83) .. (225.62,141.83) .. controls (223.78,141.83) and (222.29,140.34) .. (222.29,138.5) -- cycle ;
\draw  [dash pattern={on 7.5pt off 2.25pt}]  (225.62,138.5) -- (208.2,138.5) -- (181.33,138.5) ;
\draw [color={rgb, 255:red, 255; green, 255; blue, 255 }  ,draw opacity=1 ] [dash pattern={on 7.5pt off 2.25pt}]  (181.33,3) -- (181.33,274) ;

\draw (159.9,225.71) node [anchor=north west][inner sep=0.75pt]    {$\textcolor[rgb]{1,1,1}{0.1}$};
\draw (25,282.4) node [anchor=north west][inner sep=0.75pt]    {$0$};
\draw (6,-3.88) node [anchor=north west][inner sep=0.75pt]    {$1$};
\draw (8,263.4) node [anchor=north west][inner sep=0.75pt]    {$0$};
\draw (290,282.4) node [anchor=north west][inner sep=0.75pt]    {$1$};
\draw (186.9,114.71) node [anchor=north west][inner sep=0.75pt]    {$0.1\overline{6}$};

\end{tikzpicture}
  \end{adjustbox}
  \caption{Initialization}
  \label{subfig:a}
\end{subfigure}%
\hspace{0.01\linewidth}%
\begin{subfigure}{0.24\linewidth}
  \centering
  \includegraphics[width=\linewidth]{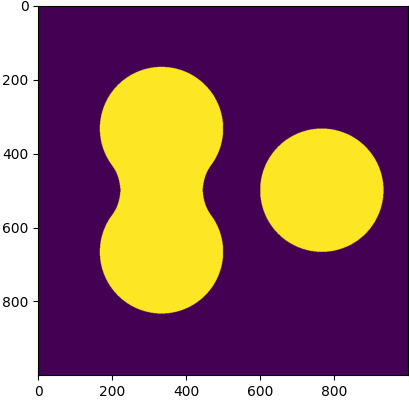}
  \caption{$h=0.00001$}
    \label{subfig:b}
\end{subfigure}%
\hspace{0.01\linewidth}%
\begin{subfigure}{0.24\linewidth}
  \centering
  \includegraphics[width=\linewidth]{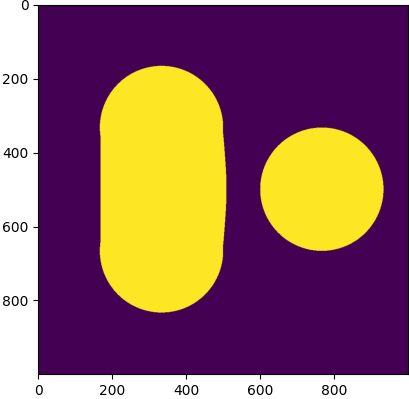}
  \caption{$h=0.00085$}
    \label{subfig:c}
\end{subfigure}%
\hspace{0.01\linewidth}%
\begin{subfigure}{0.24\linewidth}
  \centering
  \includegraphics[width=\linewidth]{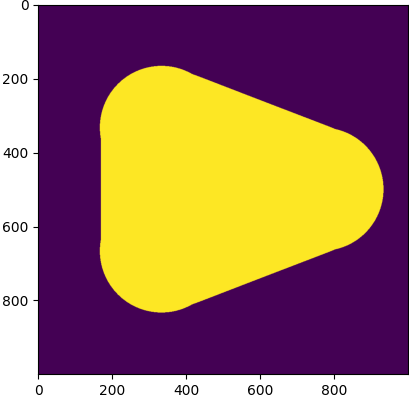}
  \caption{$h=0.0009$}
    \label{subfig:d}
\end{subfigure}

\caption{Simualation to visualize numerical errors in the steady states. Fig.~\ref{subfig:a} shows schematic the initial configuration which are at the same time the obstacles. Fig.~\ref{subfig:b}, \ref{subfig:c} and \ref{subfig:d} show the steady states of our scheme~\eqref{eq:implemented_scheme} for different diffusion times $h$ but fixed spatial discretization $\eps = 0.001$. In  Fig.~\ref{subfig:b} the diffusion time $h$ is not big enough for the given spatial resolution such that the convex hull - the steady state of the obstacle mean curvature flow - is not well approximated. In  Fig.~\ref{subfig:c} $h$ is chosen well for the example. In  Fig.~\ref{subfig:d} $h$ is too big such that non-touching areas get connected.} 
\label{fig:steady_states}
\end{figure}

\section{Outlook}
 Our convergence proofs show that combining the MBO scheme with a pointwise update by the obstacle constraint yield an efficient algorithm for the approximation of mean curvature flow with obstacles. We believe that also other variants of the MBO scheme can be similarly adapted to incoperate an obstacle constraint. This is relevant for applicatations. For example the model of~\cite{elliott2012modelling} incorporates, among other effects, obstacles but also volume constraints on the cell. This can be approximated by adapting the volume-constrained MBO scheme~\cite{zbMATH06841827, arXiv:2412.17694, arXiv:2508.09064}. The scheme for two phases then reads:
 \begin{itemize}
  \item[\textbf{Step 1}] Diffusion: $\schvar^{\ell + 1/3} = e^{-h \Delta}$
  \item[\textbf{Step 2}] Update on obstacle: $\schvar^{\ell + 2/3} = \chi_{\Phi} \wedge - \chi_{\Psi} \vee \schvar^{\ell + 1/3}$
  \item[\textbf{Step 3}] Threshold with volume constraint: $\schvar^{\ell + 1} = \begin{cases}
    1 & \text{if } \schvar^{\ell + 2/3} > \lambda,\\
    -1 & \text{if } \schvar^{\ell + 2/3} < \lambda,
  \end{cases}$ where $\lambda$ is a thresholding value such that $|\{\schvar^{\ell + 2/3} > \lambda\}| = V$ for a given volume $V$.
\end{itemize}
The Lagrange multiplier $\lambda$ can be found in $\mathcal{O}(N)$ with standard weighted median algorithms~\cite[Theorem 17.3]{zbMATH05993956}.
 The above scheme is so powerful because it keeps the minimizing movement interpretation
  \begin{align}
    \schvar^{\ell + 1} &\in \argmin_{u:\dom \rightarrow \{-1,1\}} E_h(u) + \frac{1}{2h}d_h^2(\schvar, \schvar^\ell) + \lambda \int \schvar \quad \quad \text{ s.t. } \quad \quad \chi_\Phi \leq \schvar \leq \chi_\Psi , \label{eq:volume_lagrange}
  \end{align}
  or equivalently 
  \begin{align*}
    \schvar^{\ell + 1} &\in \argmin_{u:\dom \rightarrow \{-1,1\}} E_h(u) + \frac{1}{2h}d_h^2(\schvar, \schvar^\ell)  \quad \quad \text{ s.t. } \quad \quad \chi_\Phi \leq \schvar \leq \chi_\Psi \quad \text{ and } \quad |\{u = 1\}| = V, 
  \end{align*}
 for the unconditional stability and the relation to the gradient flow structure of mean curvature flow. Although, adapting the algorithms seems to be straight forward, the analytical side for volume-preserving mean curvature flow with obstacles is not studied yet; so, there is still more work to do to understand the volume preserving, obstacle MBO.

Furthermore, a multiphase version of the here presented obstacle MBO scheme would be a natural extension. The obstacles $\Phi_i$ are then inner obstacles for the phase $i$ and outer obstacles for any other phase $j\neq i$. Again, this can be done with volume constraints, such that the minimizing movement interpretation~\eqref{eq:volume_lagrange} is kept. The Lagrange multiplier $\lambda$ and consequently a minimizer $u^{\ell + 1}$ can be computed by using one of the algorithms~\cite{zbMATH06841827, arXiv:2508.09064}. 

The concept of viscosity solutions is not available for multiphase, obstacle mean curvature flow, as this flow does not satisfy a comparison principle. Instead one should tackle the multiphase problem by its gradient flow structure and show convergence to Brakke~\cite{zbMATH07161205}, De Giorgi~\cite{zbMATH07373006, arXiv:2412.17694} or distributional~\cite{zbMATH06654682} solutions. The main challenge to that aim is to understand the convergence of the Lagrange multiplier $\frac{4}{\sqrt{h}}$ in the minimizing movement interpretation~\eqref{eq:minmov_cont}.

  A second meaningful adaptation would be anisotropic motions. Mean curvature flow origins from material science where it models the motion of grain boundaries in polycristals. Here, obstacles are important to model impurities in the material. But due to microscopic crystal structure the motion is actually anisotropic. Therefore, it is of interest to adapt the MBO scheme for anisotropic mean curvature flow with obstacles. The natural approach to adapt the scheme \eqref{def:scheme} is to replace the heat kernel with an anisotropic kernel. For the anisotropic MBO scheme without obstacles there are already several papers answering the question on how to choose the kernel. First, Ishii-Pires-Souganidis~\cite{zbMATH01336919} consider a broad class of positive kernels. But their kernels may not be positive in Fourier space such that there is no guarantee of energy dissipation in~\eqref{eq:minmov_cont}. Bonnetier-Bretin-Chambolle~\cite{zbMATH06049950} study kernels with positive Fourier transform but the kernels may not be positive in physical space such that the monotonicity of the scheme may break down. Furthermore, Elsey-Esedo{\u g}lu~\cite{zbMATH06864183} characterize which anisotropies can be achieved as limits of the MBO scheme with positive kernels; they also showed that there exists in physical and in Fourier space positive kernels for those anisotropies. Nevertheless, the analysis for those kernels with an obstacle constraint is still an exciting, completely open problem. 

\section*{Acknowledgments}
This project is funded by the Deutsche Forschungsgemeinschaft (DFG, German Research Foundation) under Germany's Excellence Strategy EXC 2181/1 - 390900948 (the Heidelberg STRUCTURES Excellence Cluster). The authors would like to thank Keisuke Takasao for many fruitful discussions and for his insightful comments on topics related to obstacle mean curvature flow.

\frenchspacing
\bibliographystyle{abbrv}
\bibliography{references}
  \end{document}